\definecolor{webgreen}{rgb}{0,.5,0}
\definecolor{webbrown}{rgb}{.8,0,0}
\definecolor{emphcolor}{rgb}{0.95,0.95,0.95}
\ifpdf \hypersetup{pdftex,
%             pdftitle={Decision Making with Poisson process},
%             pdfauthor={},
            pdfstartview=FitH, %%Fit, FitB, FitH
            bookmarksopen=true,
            bookmarksnumbered=true
} \else \hypersetup{dvips} \fi
\numberwithin{equation}{section}
\newtheorem{proposition}{Proposition}[section]
\newtheorem{corollary}{Corollary}[section]
\newtheorem{remark}{Remark}[section]
\newtheorem{lemma}{Lemma}[section]
\numberwithin{remark}{section} \numberwithin{proposition}{section}
\numberwithin{corollary}{section}
\renewcommand{\S}{\mathcal{S}}
\newcommand {\ME}{\mathbb{E}^{x}}
\newcommand {\R}{\mathbb{R}}
\newcommand {\F}{\mathcal{F}}
\newcommand {\A}{\mathcal{A}}
\newcommand {\p}{\mathbb{P}}
\newcommand {\E}{\mathbb{E}}
\newcommand{\diff}{{\rm d}}
\newcommand{\word}{\hspace{0.2cm}}
\newcommand{\conn}{\quad\text{and}\quad}
\newcommand{\1}{\mbox{1}\hspace{-0.25em}\mbox{l}}
\newcommand{\lev}{L\'{e}vy }
\newcommand{\s}{\bar{s}}
\newcommand{\II}{\mathcal{I}}
\title[Optimal Stopping of Diffusion and its Maximum with Boundary]{Explicit Solutions for Optimal Stopping of Maximum Process with Absorbing Boundary that Varies with It}
\author[M. Egami]{Masahiko Egami}
\address[M. Egami]{Graduate School of Economics,
Kyoto University, Sakyo-Ku, Kyoto, 606-8501, Japan}
\email{egami@econ.kyoto-u.ac.jp}
\urladdr{http://www.econ.kyoto-u.ac.jp/{\textasciitilde}egami/}
\thanks{First Draft: September 28, 2015.  This version: January 17, 2016. This work is in part supported
by Grant-in-Aid for Scientific Research (B) No. 26285069, Japan Society for the Promotion of Science.}
\author[T. Oryu]{Tadao Oryu}
\address[T. Oryu]{Institute of Economic Research,
Kyoto University, Sakyo-Ku, Kyoto, 606-8501, Japan}
\email{oryu@kier.kyoto-u.ac.jp}
\date{}
\begin{document}
\begin{abstract}
We provide, in a general setting, explicit solutions for optimal stopping problems that involve a diffusion process and its running maximum.  Besides, a new feature includes absorbing boundaries that vary with the value of the running maximum. The existence of the absorbing boundary of this type makes the problem
harder but more practical and flexible.
Our approach is to use the excursion theory for \lev processes. Since general diffusions are, in particular, not of independent increments,  we use an appropriate measure change to make the process have that property. Then we rewrite the original two-dimensional problem as an infinite number of one-dimensional ones and complete the solution. We show general solution methods with explicit value functions and corresponding optimal strategies, illustrating them by some examples.
\end{abstract}

\maketitle \noindent \small{\textbf{Key words:} Optimal stopping; excursion theory;
 diffusions; scale functions.\\
%\noindent JEL Classification: G32, D81, C61 \\
\noindent Mathematics Subject Classification (2010) : Primary: 60G40
Secondary: 60J75 }\\

\section{Introduction}
%We study the following optimal stopping problems.
We let  $X=(X_t, t\ge 0)$ be one-dimensional diffusion and denote by $Y$ the reflected process,
  \[
  Y_t=S_t -X_t
  \]
  where $S_t=\sup _{u \in [0,t]} X_u\vee s$ with $s=S_0$. Hence $Y$ is the excursion of $X$ from its running maximum $S$.  We consider an optimal stopping problem that involves both $X$ and $S$.  It is subject to absorbing boundary that varies with $S$.  That is,
  \begin{eqnarray*}
\bar{V}(x,s)&=&\sup_{\tau} \E^{x,s} \left[ \int^{\tau\wedge \zeta}_0 e^{-qt}f(X_t,S_t)\diff t+ \1_{\{\tau<\zeta\}}e^{-q\tau}g(X_{\tau},S_{\tau}) \right]
\end{eqnarray*}
subject to  absorption \[\zeta:=\inf \{t\geq 0 : S_t-X_t > b(S_t)\}.\]
where the rewards $f$ and $g$ are measurable functions from $\R^2$ to $\R_+$ and $b: \R\mapsto \R_+$ is also a measurable function. The rigorous mathematical definition of this problem is presented in Section \ref{sec:model}.    This setup means that while $X$ grows and keeps attaining new maxima, the absorbing boundary is accompanying with $S$.
In this paper, we shall solve for optimal strategy and corresponding value function explicitly along with optimal stopping region in the $(x, s)$-plane. The existence of the absorbing state makes the stopping region more complex.

The idea is the following: we look at excursions that occur from each level of $S$, and reduce the problem to an infinite number of one-dimensional optimal stopping problems.  For finding the explicit form of the value function, we employ the theory of excursion of \lev processes, in particular the characteristic measure that is related to the height of excursions.  (Refer to Bertoin \cite{Bertoin_1996} as a general reference.)
 Since the diffusion $X$ is not in general of independent increments, we use the measure change \eqref{eq:change-of-measure} to make the diffusion behave like a Brownian motion under the new measure.  Having done that, we solve, at each level of $S$, one-dimensional optimal stopping problems by using the excessive characterization of the value function.  This corresponds to the concavity of the value function after certain transformation.  See Dynkin\cite{dynkin}, Alvarz\cite{alvarez2} and Dayanik and Karatzas \cite{DK2003}.  Note that for the excursion theory for spectrally negative \lev processes (that have only downward jumps), see also Avram et al. \cite{avram-et-al-2004}, Pistorius \cite{Pistorius_2004} \cite{Pistorius_2007} and Doney \cite{Doney_2005} where, among others, an exit problem of the reflected process $Y$ is  studied.
For optimal stopping that involve both $S$ and $X$, we mention a pioneering work of Peskir\cite{peskir1998}.   There are also  Ott \cite{ott_2013} and Guo and Zervos \cite{Guo-Zervos_2010}.  In the former paper, the author solves problems including a capped version of the Shepp-Shiryaev problem \cite{shepp-shiryaev-1993} and the latter makes  another contribution that extends \cite{shepp-shiryaev-1993}. A recent development in this area includes Alvarez and Mato\"{a}ki \cite{alvarez-matomaki2014} where a discretized approach is taken to find optimal solutions and a numerical algorithm is presented.

Our contributions in this paper may advance the literature in several respects:  we do not assume any specific forms or properties in the reward functions and we provide \emph{explicit} forms of the value function \emph{with or without the absorbing boundary} and illustrate the procedure of the solution method. Hence we present a very general solution method in a general setting.

The existence of the absorbing state that varies with the maximum process leads to  various applications.  In this paper, we provide a new problem where an investor puts her money in risky assets and she maintains the following investment policy:  if the drawdown  of the asset value exceeds certain level, say a fraction of the running maximum, she would sell all her risky assets and put the proceeds to risk-free assets. This investment policy is due to avoid the liquidity problem that prevailed in the years of the financial crisis. That is, when asset markets deteriorate, some investors are forced to sell their assets further since they need cash to repay their debt.  That would lead to vicious circle: further price depreciations and depletion of liquidity.  Hence her problem is to set up a rule as to when she converts her risky assets to risk-free.  Other applications to real-life problems include bank's failures during the financial crisis.  These banks had maintained high leverage and accordingly, they were, despite their large size,  not so safe since the bankruptcy threshold (=absorption state)  keeps up with the size of the banks.  Egami and Oryu \cite{Egami-Oryu2013} \cite{Egami-Oryu2013a} modeled this phenomenon by using spectrally negative \lev processes for the bank's asset size $X$.
Further applications are possible:  one can add  barriers that would nullify the value of lookback options (with the terminal payoff $g(x, s)=s-x$ for instance).

The rest of the paper is organized as follows.  In Section
\ref{sec:model}, we formulate a mathematical model with a review of some important facts of linear diffusions, and then
find an optimal solution in Section \ref{sec:solution}.  Finally in Section 4, we shall demonstrate the solution methodology using an example of the investment problem in risky assets, providing an explicit calculation.

\section{Mathematical Model}\label{sec:model} %%%%%%%%%%%%%%%%%%%%%%%%%%%%%%%%%%%%%%%

Let the diffusion process $X=\{X_t;t\geq 0\}$ represent the state variable defined on the probability space
$(\Omega, \F, \p)$, where $\Omega$ is the set of all possible realizations of the
stochastic economy, and $\p$ is a probability measure defined on $\F$. The state space of $X$ is given by
$\mathcal{I}\subseteq \mathbb{R}$.  We denote by
$\mathbb{F}=\{\F_t\}_{t\ge 0}$ the filtration with respect to which $X$ is adapted and with the usual
conditions being satisfied. We assume that $X$ satisfies the following stochastic differential equation:
\[
\diff X_t=\mu (X_t)\diff t + \sigma (X_t) \diff B_t,\quad X_0=x,
\]
where $\mu (x), \sigma(x) \in \R$ for any $x\in \R$ and $B=\{B_t: t\ge 0\}$ is a standard Brownian motion.

The running maximum process $S=\{S_t;t\geq 0\}$ with $s=S_0$ is defined by
$S_t=\sup _{u \in [0,t]} X_u\vee s$. In addition, we write $Y$ for the reflected process defined by $Y_t=S_t-X_t$,
and let $\zeta$ be the stopping time defined by
\[\zeta:=\inf \{t\geq 0 : S_t-X_t > b(S_t)\},\]
which is the time of absorption.  Note that $b: \R\mapsto \R_+$ is a measurable function and  the level at which the process $X$ is absorbed depends on $S$.

We consider the following optimal stopping problem and the value function $\bar{V}:\R^2 \mapsto \R$ associated with initial values $X_0=x$ and $S_0=s$;
\begin{eqnarray}\label{problem}
\bar{V}(x,s)&=&\sup_{\tau\in\S} \E^{x,s} \left[ \int^{\tau\wedge \zeta}_0 e^{-qt}f(X_t,S_t)\diff t+ \1_{\{\tau<\zeta\}}e^{-q\tau}g(X_{\tau},S_{\tau}) \right]
\end{eqnarray}
where $\mathbb{P}^{x,s}(\,\cdot\,):=\mathbb{P}(\,\cdot\,|\,X_0=x, S_0=s)$ and $\E^{x,s}$ is the expectation operator corresponding to $\mathbb{P}^{x,s}$, $q\geq 0$ is the constant discount rate and $\S$ is the set of all $\mathbb{F}$-adapted stopping times.  The payoff is composed of two parts; the running income to be received continuously until stopped or absorbed, and the terminal reward part.  The running income function
$f:\R^2 \mapsto \R$ is a measurable function that satisfies
\[
\E^{x,s}\left[\int_0^\infty e^{-qt}|f(X_t,S_t)|\diff t\right]<\infty.
\]
The reward function $g:\R^2 \mapsto \R_+$ is assumed to be measurable. Our main purpose is to calculate $\bar{V}$ and to find the stopping time $\tau^*$ which attains the supremum.

For each collection $D=(D(s))_{s\in\R}$ of Borel measurable sets, we define a stopping time $\tau(D)$ by
\begin{equation}\label{eq:tau-D}
\tau(D):=\inf\{t\geq 0:S_t-X_t\in D(S_t)\},
\end{equation}
and define a set of stopping times $\S'$ by $\S':=\{\tau(D)\}$. In other words, $\tau(D)$ is the first time the excursion $S-X$ from level, say $S=s$, enters the region $D(s)$. Suppose that $D(m) = (c,b(m)]$ for any $m\in\R$, we write
\[
\tau_c:=\inf\{t\geq 0:S_t-X_t>c\}.
\]

Next we let $\S'(b)$ be the set of stopping times defined by
\begin{equation}\label{eq:D(m)}
\S'(b):=\{\tau(D):D(m)\subset [0,b(m)] \;{\rm for\; all }\;m\in \R\}.
\end{equation}
Note that if $\tau\in\S'(b)$, then $\tau < \zeta$. The following lemma shows that it suffices to consider stopping times $\tau\in\S'(b)$:
\begin{lemma}\label{lem:reduction}
%If $\tau^*\in\S'$, then $\tau^*\in\S'(b)$.
Let us define $u: \R^2\times \S' \mapsto \R$ by
\begin{eqnarray*}
u(x, s; \tau)&:=&\E^{x, s}\left[ \int^{\tau\wedge \zeta}_0 e^{-qt}f(X_t,S_t)\diff t+ \1_{\{\tau<\zeta\}}e^{-q\tau}g(X_{\tau},S_{\tau})\right]
\end{eqnarray*}
Then for any $\tau\in \S'$, we can find a $\tau'\in \S'(b)$ such that $u(x, s; \tau)=u(x, s; \tau')$.
\end{lemma}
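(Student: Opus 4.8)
The plan is to take an arbitrary $\tau = \tau(D) \in \S'$ and simply truncate its defining sets at the absorbing boundary: set $D'(m) := D(m)\cap[0,b(m)]$ for every $m\in\R$ and put $\tau' := \tau(D')$. Each $D'(m)$ is Borel (an intersection of the Borel set $D(m)$ with an interval whose endpoint $b(m)$ is measurable), and by construction $D'(m)\subset[0,b(m)]$, so $\tau'\in\S'(b)$ by \eqref{eq:D(m)}; in particular $\tau'<\zeta$. It therefore remains only to show the identity $u(x,s;\tau)=u(x,s;\tau')$, which I would obtain from a pathwise comparison of the two stopping times rather than from any computation involving $f$ or $g$.

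The key observation is that, by the very definition of $\zeta$ as a first-passage time, every $t<\zeta$ satisfies $S_t-X_t\le b(S_t)$: were $S_t-X_t> b(S_t)$ for some $t<\zeta$, the infimum defining $\zeta$ would be at most $t$, a contradiction. From $D'(m)\subseteq D(m)$ one has the inclusion of hitting sets $\{t:S_t-X_t\in D'(S_t)\}\subseteq\{t:S_t-X_t\in D(S_t)\}$ and hence $\tau'\ge\tau$ on every path. On the event $\{\tau<\zeta\}$, at the time $\tau$ we have both $S_\tau-X_\tau\in D(S_\tau)$ and $S_\tau-X_\tau\le b(S_\tau)$, so $S_\tau-X_\tau\in D(S_\tau)\cap[0,b(S_\tau)]=D'(S_\tau)$; this forces $\tau'\le\tau$, whence $\tau'=\tau$ (and $\tau'<\zeta$) there. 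On the complementary event $\{\tau\ge\zeta\}$ we have $\tau'\ge\tau\ge\zeta$, so $\tau\wedge\zeta=\zeta=\tau'\wedge\zeta$ and both indicators $\1_{\{\tau<\zeta\}}$, $\1_{\{\tau'<\zeta\}}$ vanish. Combining the two events gives $\tau\wedge\zeta=\tau'\wedge\zeta$ together with $\1_{\{\tau<\zeta\}}e^{-q\tau}g(X_\tau,S_\tau)=\1_{\{\tau'<\zeta\}}e^{-q\tau'}g(X_{\tau'},S_{\tau'})$ pathwise.

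With these two pathwise identities in hand, the running term $\int_0^{\tau\wedge\zeta}e^{-qt}f(X_t,S_t)\diff t$ and the terminal contribution coincide for $\tau$ and $\tau'$ on every trajectory, so taking $\E^{x,s}$ yields $u(x,s;\tau)=u(x,s;\tau')$, completing the argument. The one step that needs genuine care---the only real obstacle---is the claim $\tau'=\tau$ on $\{\tau<\zeta\}$: it rests on knowing that the reflected process has not yet strictly exceeded $b(S)$ at the instant of stopping, which is exactly the first-passage characterisation noted above, and the continuity of the paths of $X$ (hence of $Y=S-X$) makes this transparent, since no overshoot at the boundary can disrupt the comparison.
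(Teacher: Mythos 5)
Your proof is correct and follows the same route as the paper: replace $D$ by its truncation $D'(m)=D(m)\cap[0,b(m)]$ and observe that the resulting stopping time coincides with $\tau(D)$ before $\zeta$ while both payoffs vanish afterwards. The only difference is cosmetic --- the paper truncates one level $s$ at a time and leaves the pathwise comparison implicit, whereas you truncate all levels simultaneously and spell out the identities $\tau\wedge\zeta=\tau'\wedge\zeta$ and $\1_{\{\tau<\zeta\}}e^{-q\tau}g(X_\tau,S_\tau)=\1_{\{\tau'<\zeta\}}e^{-q\tau'}g(X_{\tau'},S_{\tau'})$ explicitly, which is a somewhat more complete write-up of the same idea.
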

\begin{proof}
Set the collections $D_i=(D_i(s))_{s\in\R}$, $i=1,2$, by $D_2(s)=D_1(s)\cap [0,b(s)]$ for some $s$ and $D_1(m)=D_2(m)$ on $m\neq s$.
Then it is clear from the definition that $\tau(D_1)<\zeta$ if and only if $\tau(D_2)<\zeta$, and $\tau(D_1)=\tau(D_2)$ on $\{\tau(D_1)<\zeta\}$. Hence the right hand side of
(\ref{problem}) for $\tau=\tau(D_1)$ and $\tau=\tau(D_2)$ are equal to each other. Hence the lemma is proved.
\end{proof}

Due to the above lemma, we can reduce the original problem \eqref{problem} to the following:
\begin{equation}\label{problem2}
\bar{V}(x,s)=\sup_{\tau\in\S'(b)} \E^{x,s} \left[ \int^{\tau}_0 e^{-qt}f(X_t,S_t)\diff t+ e^{-q\tau}g(X_{\tau},S_{\tau})\right]
\end{equation}

\subsection{Optimal Strategy}\label{subsec:optimality}  %%%%%%%%%%%%%%%%%%%%%%%%%%%%%%%%%%%%%%%%
We will reduce the problem \eqref{problem2} to an infinite number of one-dimensional optimal stopping problem and discuss the optimality of the
proposed strategy \eqref{eq:tau-D}.  Let us denote by $\bar{f}:\R^2\mapsto\R$ the $q$-potential of $f$:
\[
\bar{f}(x,s):=\E^{x,s} \left[ \int^{\infty}_0 e^{-qt}f(X_t,S_t)\diff t \right].
\]
From the strong Markov property of $(X,S)$, we have
\begin{eqnarray}\label{eq:potential-rewrite}
&&\E^{x,s} \left[ \int^{\tau\wedge \zeta}_0 e^{-qt}f(X_t,S_t)\diff t \right]\\
&=&\E^{x,s} \left[\int^{\infty}_0 e^{-qt}f(X_t,S_t)\diff t - \int^{\infty}_{\tau\wedge \zeta} e^{-qt}f(X_t,S_t)\diff t \right]\nonumber\\
&=&\bar{f}(x,s)-\E^{x,s} \left[\E \left[\int^{\infty}_{\tau\wedge \zeta} e^{-qt}f(X_t,S_t)\diff t \Bigm| \mathcal{F}_{\tau\wedge \zeta}\right] \right]\nonumber\\
&=&\bar{f}(x,s)-\E^{x,s} \left[e^{-q(\tau\wedge \zeta)}\E^{X_{\tau\wedge \zeta},S_{\tau\wedge \zeta}}\left[\int^{\infty}_0 e^{-qt}f(X_t,S_t)\diff t \right] \right]\nonumber\\
&=&\bar{f}(x,s)-\E^{x,s} \left[e^{-q(\tau\wedge \zeta)}\bar{f}(X_{\tau\wedge \zeta},S_{\tau\wedge \zeta})\right]\nonumber\\
&=&\bar{f}(x,s)-\E^{x,s} \left[\1_{\{\tau<\zeta\}}e^{-q\tau}\bar{f}(X_{\tau},S_{\tau})+\1_{\{\zeta\le \tau\}}e^{-q\zeta}\bar{f}(X_{\zeta},S_{\zeta})\right].\nonumber
\end{eqnarray}
Hence the value function $\bar{V}$ can be written as
\begin{eqnarray}\label{eq:value-function-rewritten}
\bar{V}(x,s)&=&\bar{f}(x,s)+V(x,s),\nonumber
\end{eqnarray}
where
\begin{align}\label{eq:V-modified}
\hspace{0.5cm} V(x,s):=\sup_{\tau\in\S}\E^{x,s}\left[\1_{\{\tau<\zeta\}}e^{-q\tau}(g-\bar{f})(X_{\tau},S_{\tau})-\1_{\{\zeta\le \tau\}}e^{-q\zeta}\bar{f}(X_{\zeta},S_{\zeta})\right].
\end{align}
Since $\bar{f}(x, s)$ has nothing to do with the choice of $\tau$, we concentrate on $V(x, s)$.

Let us first define the first passage times of $X$:
\[T_a:=\inf\{t\geq 0:X_t > a\}\quad\text{and}\quad T_a^{-}:=\inf\{t\ge 0: X_t
 <a\}.\]
By the dynamic programming principle, we can write $V(x,s)$ as
\begin{align}\label{eq:dp}
V(x,s)
%&=&\sup_{\tau\in\S}\E^{x,s}\left[\1_{\{\tau<\zeta\}}e^{-q\tau}(g-\bar{f})(X_{\tau},S_{\tau})\right.\nonumber\\
%&&\left.-\1_{\{\zeta\leq\tau\}}e^{-q\zeta}(k+\bar{f})(X_{\zeta},S_{\zeta})\right]\nonumber \\
&=\sup_{\tau\in\S}\E^{x,s}\left[\1_{\{\tau<\theta\}}\1_{\{\tau<\zeta\}}e^{-q\tau}(g-\bar{f})(X_{\tau},S_{\tau})\right.\\
&\left.-\1_{\{\zeta<\theta\}}\1_{\{\zeta\leq\tau\}}e^{-q\zeta}\bar{f}(X_{\zeta},S_{\zeta})+\1_{\{\theta<\tau\wedge\zeta\}}e^{-q\theta}V(X_{\theta},S_{\theta})\nonumber\right],
\end{align}
for any stopping time $\theta\in\S$. See, for example, Pham \cite{Pham-book} page 97. Now we set $\theta =T_s$ in (\ref{eq:dp}).
For each level $S=s$ from which an excursion $Y=S-X$ occurs, the value $S$ does not change during the excursion.
Hence, during the first excursion interval  from $S_0=s$, we have $\zeta=T^-_{s-b(s)}$ and $S_t=s$ for any $t\leq T_s$, and (\ref{eq:dp}) can be written as the following one-dimensional problem for the state process $X$;
\begin{eqnarray}\label{eq:one-dim-version}
V(x,s)&=&\sup_{\tau\in\S}\E^{x,s}\left[\1_{\{\tau<T_s\}}\1_{\{\tau<T^-_{s-b(s)}\}}e^{-q\tau}(g-\bar{f})(X_{\tau},s)\right.\\
&&-\1_{\{T^-_{s-b(s)}<T_s\}}\1_{\{T^-_{s-b(s)}\leq \tau\}}e^{-qT^-_{s-b}}\bar{f}(X_{T^-_{s-b(s)}},s)\nonumber\\
&&\left.+\1_{\{T_s<\tau\wedge T^-_{s-b(s)}\}}e^{-qT_s}V(s,s)\right].\nonumber
\end{eqnarray}
Now we can look at \emph{only} the process $X$ and find $\tau^*\in \S$.

In relation to \eqref{eq:one-dim-version}, we consider the following one-dimensional optimal stopping problem as for $X$ and its value function $\widehat{V}:\R^2\mapsto \R$;
\begin{align}\label{eq:Vhat}
\widehat{V}(x,s)&=\sup_{\tau\in\S}\E^{x,s}\left[\1_{\{\tau<T_s\}}\1_{\{\tau<T^-_{s-b(s)}\}}e^{-q\tau}(g-\bar{f})(X_{\tau},s)\right.\\
&\left.-\1_{\{T^-_{s-b(s)}<T_s\}}\1_{\{T^-_{s-b(s)}\leq \tau\}}e^{-qT^-_{s-b}}\bar{f}(X_{T^-_{s-b(s)}},s)+\1_{\{T_s<\tau\wedge T^-_{s-b(s)}\}}e^{-qT_s}K\right]\nonumber,
\end{align}
where $K\geq 0$ is a constant. Note that $V=\widehat{V}$ holds when $K=V(s,s)$, and $V(s,s)$ can be obtained by our solution method offered in Section 3.

Recall that, in the pursuit of optimal strategy $\tau^*$ in the linear diffusion case, we can utilize the full characterization of the value function and  optimal stopping
rule: an optimal stopping rule  is given by the threshold strategy in a very general setup.  In our present problem, optimal strategy belongs to the set of $\tau(D)$ in \eqref{eq:tau-D}.
See Dayanik and Karatzas \cite{DK2003}; Propositions 5.7 and 5.14. See also Pham \cite{Pham-book}; Section 5.2.3.  Note that, however, writing the value of $V(s, s)$ in an explicit form is not trivial and is an essential part of the solution, which we shall do in the next section (see Propositions \ref{prop:1} and \ref{prop:2}).

\subsection{Important Facts of Diffusions}\label{sec:diffusion-facts}
Let us recall the fundamental facts about one-dimensional diffusions; let the differential operator $\A$ be the infinitesimal generator of the process $X$ defined by
\[
\A v(\cdot)=\frac{1}{2}\sigma^2(\cdot)\frac{\diff^2  v}{\diff x^2}(\cdot)+\mu(\cdot)\frac{\diff v}{\diff x}(\cdot)
\]
and consider the ODE $(\A-q)v(x)=0$. This equation has two
fundamental solutions: $\psi(\cdot)$ and $\varphi(\cdot)$.
We set
$\psi(\cdot)$ to be the increasing and $\varphi(\cdot)$ to be the
decreasing solution.  They are linearly independent positive
solutions and uniquely determined up to multiplication. It is well
known that
\begin{align*}
  \ME[e^{-\alpha\tau_z}]%1_{\{\tau_z<\infty\}}
  =\begin{cases}
\frac{\psi(x)}{\psi(z)}, & x \le z,\\[4pt]
\frac{\varphi(x)}{\varphi(z)}, &x \ge z.
  \end{cases}
\end{align*} For the complete characterization of $\psi(\cdot)$ and
$\varphi(\cdot)$, refer to It\^{o} and McKean \cite{IM1974}. Let us now define
\begin{align} \label{eq:F}
F(x)&:=\frac{\psi(x)}{\varphi(x)}, \hspace{0.5cm} x\in
\mathcal{I}.
\end{align}
Then $F(\cdot)$ is continuous and strictly increasing.  Next,
following Dynkin (pp.\ 238, \cite{dynkin}), we define concavity
of a function with respect $F$ as follows:
%Let $F :\mathcal{I}=(c, d)\rightarrow\mathbb{R}$ be a strictly
%increasing function.
A real-valued function $u$ is called \emph{$F$-concave} on $\mathcal{I}$
if, for every $x\in[l, r]\subseteq \mathcal{I}$,
\[
u(x)\geq
u(l)\frac{F(r)-F(x)}{F(r)-F(l)}+u(r)\frac{F(x)-F(l)}{F(r)-F(l)}.\]

Now consider the optimal stopping problem:
\[V(x)=\sup_{\tau\in \S}\E^x[e^{-q \tau}h(X_\tau)]
\]
where $h$: $[c, d]\mapsto \R_+$. Let $W(\cdot)$ be the smallest nonnegative concave majorant of $H:=(h/\varphi)\circ F^{-1}$ on $[F(c), F(d)]$ where
$F^{-1}$ is the inverse of $F$. Then we have $V(x)=\varphi(x)W(F(x))$ and the optimal stopping region are
\[
\Gamma:=\{x\in [c, d]: V(x)=h(x)\} \conn \tau^*:=\inf\{t\ge 0: X_t\in \Gamma\}
\] as in Propositions 4.3 and 4.4 of \cite{DK2003}.

\section{Explicit Solution}\label{sec:solution}
Now we look to an explicit solution of $\bar{V}$ for $\tau\in\S'$. The first step is to find $V(s, s)$ in \eqref{eq:one-dim-version}.
\subsection{When \bf{$S_0=X_0$}}
As a first step, we consider the case $S_0=X_0$. Set stopping times
$T_m$ as $T_m=\inf\{t\geq 0:X_t > m\}$ and recall $\S'(b):=\{\tau(D):D(m)\subset [0,b(m)]\}$ as in \eqref{eq:D(m)}. Define the function $l_D:\R_+\mapsto\R_+$ by
\begin{equation}\label{eq:lD}
l_D(m):=\inf D(m).
\end{equation}
 %\quad D\in S'
for which $\tau(D) \in \S'(b)$.   In other words, if $S_0=X_0$, given a threshold strategy $\tau=\tau(D)$ where  $D(m)$ is in the form of $[a, c]\subset [0, b(m)]$, the value $l_{D(m)}$ is equal to $a$.  Accordingly,
$S_{\tau_{l_D(m)}}=S_{\tau_a}$ on the set $\{S_{\tau_{l_D(m)}}\in\diff m\}$.

 From the strong Markov property of $(X,S)$, when $\tau(D)\in\S'(b)$ and
$S_0=X_0=s$, we have
\begin{eqnarray}\label{eq:interim}
&&\E^{s,s}\left[\1_{\{\tau(D)<\zeta\}}e^{-q\tau(D)}(g-\bar{f})(X_{\tau(D)},S_{\tau(D)})\right]\\
&=&\int^{\infty}_s\E^{s,s}\left[\1_{\{\tau(D)<\zeta, S_{\tau(D)}\in\diff m\}}e^{-q\tau(D)}(g-\bar{f})(X_{\tau(D)},S_{\tau(D)})\right]\nonumber\\
&=&\int^{\infty}_s\E^{s,s}\left[\1_{\{T_m\leq\tau(D)\}}e^{-qT_m}\E^{m,m}\left[e^{-q\tau_{l_D(m)}}(g-\bar{f})(X_{\tau_{l_D(m)}},S_{\tau_{l_D(m)}})\right.\right.\nonumber\\
&&\left.\left.\times\1_{\{S_{\tau_{l_D(m)}}-X_{\tau_{l_D(m)}}\leq b(m), S_{\tau_{l_D(m)}}\in \diff m\}}\right]\right]\nonumber\\
&=&\int^{\infty}_s\E^{s,s}\left[\1_{\{S_{\tau(D)}\geq m\}}e^{-qT_m}\right](g-\bar{f})(m-l_D(m),m)\nonumber\\
&&\times\E^{m,m}\left[e^{-q\tau_{l_D(m)}}\1_{\{S_{\tau_{l_D(m)}}\in \diff m\}}\right]\nonumber.
\end{eqnarray}
Now we calculate these expectations by changing probability measure.
We introduce the probability measure $\p^{\varphi}_{x,s}$ defined by
\begin{equation}\label{eq:change-of-measure}
\p^{\varphi}_{x,s}(A):=\frac{1}{\varphi(x)}\E^{x,s}\left[ e^{-qt}\varphi(X_t)\1_{A} \right], \text{for every } A\in\F.
\end{equation}
Then $F(X)=(F(X_t))_{t\in \R_+}$ is a process in natural scale under this measure $\p^{\varphi}_{x,s}$. See Borodin and Salminen \cite{borodina-salminen}(pp.\ 33) and Dayanik and Karatzas \cite{DK2003} for detailed explanations. Hence we can write, under $\p^{\varphi}_{x,s}$, $F(X_t)=\sigma_F B^{\varphi}_t$, where $\sigma_F>0$ is a constant and $B^{\varphi}$ is a Brownian motion under $\p^{\varphi}_{x,s}$. Since $F(X)$ is a \lev process, we can define the process $\eta:=\{\eta_t;t\geq 0\}$ of the height of the excursion as
\[
\eta_u:= \sup\{(S-X)_{T_{u-}+w} : 0\leq w \leq T_u - T_{u-}\}, \text{ if \;}  T_u > T_{u-},
\]
and $\eta_u=0$ otherwise, where $T_{u-}:=\inf\{t\ge0:X_t\ge u\}=\lim_{m\rightarrow u-}T_{m}$.
Then $\eta$ is a Poisson point process, and we denote its
characteristic measure under $\p^{\varphi}_{x,s}$ by $\nu:\F\mapsto\R_+$ of $F(X)$.
It is well known that
\[
\nu[u,\infty)=\frac{1}{u}, \text{ for } u\in\R_+.
\] See, for example, \c{C}inlar \cite{cinlar} (pp.\ 416).
By using these notations, we have\footnote{Note that when the diffusion $X$ is a standard Brownian motion $B$, then $F(x)=x$ and the right-hand side reduces to $\exp\left(-\int_s^m\frac{\diff u}{l_D(u)}\right)$.}
\begin{eqnarray}\label{eq:probability}
\p^{\varphi}_{s,s}(S_{\tau(D)}>m)&=&\exp\left( -\int^{F(m)}_{F(s)}\nu[y-F(F^{-1}(y)-l_D(F^{-1}(y))),\infty)\diff y \right)\nonumber\\
&=&\exp\left( -\int^m_{s}\frac{F'(u)\diff u}{F(u)-F(u-l_D(u))} \right).
\end{eqnarray}
On the other hand, from the definition of the measure $\p^{\varphi}_{x,s}$, we have
\begin{eqnarray*}
\p^{\varphi}_{s,s}(S_{\tau(D)}>m)
&=&\frac{1}{\varphi(s)}\E^{s,s}\left[ e^{-qT_m}\varphi(X_{T_m})\1_{\{ S_{\tau(D)}>m \}} \right]\\
&=&\frac{\varphi(m)}{\varphi(s)}\E^{s,s}\left[ e^{-qT_m}\1_{\{ S_{\tau(D)}>m \}} \right].
\end{eqnarray*}
Combining these two things together,
\begin{equation}
\E^{s,s}\left[ e^{-qT_m}\1_{\{ S_{\tau(D)}>m \}} \right]=\frac{\varphi(s)}{\varphi(m)}\exp\left( -\int^m_s\frac{F'(u)\diff u}{F(u)-F(u-l_D(u))} \right).
\end{equation}
Similarly, by changing the measure and noting that $X_{\tau_{l_D(m)}}=m-l_D(m)$, we have
\begin{eqnarray}
&&\E^{m,m}\left[e^{-q\tau_{l_D(m)}}\1_{\{S_{\tau_{l_D(m)}}\in \diff m\}}\right]\\ \nonumber
&=&\frac{\varphi(m)}{\varphi(m-l_D(m))}\cdot\frac{1}{\varphi(m)}\E^{m,m}\left[e^{-q\tau_{l_D(m)}}\varphi(X_{\tau_{l_D(m)}})\1_{\{S_{\tau_{l_D(m)}}\in \diff m\}}\right]\\ \nonumber
&=&\frac{\varphi(m)}{\varphi(m-l_D(m))}\p^{\varphi}_{m,m}(F(S_{\tau_{l_D(m)}})\in\diff F(m))\\ \nonumber
&=&\frac{\varphi(m)}{\varphi(m-l_D(m))}\p^{\varphi}_{m,m}(F(S_{\tau_{l_D(m)}})-F(X_{\tau_{l_D(m)}})=l_D(m), F(S_{\tau_{l_D(m)}})\in\diff F(m))\nonumber.
\end{eqnarray}

Since $F(X)$ is a \lev process under  $\p^{\varphi}_{s,s}$, we can apply Theorem 2 in Pistorius \cite{Pistorius_2007} to calculate the last probability. Then we have
\begin{equation}
\p^{\varphi}_{m,m}(F(S_{\tau_{l_D(m)}})-F(X_{\tau_{l_D(m)}})=l_D(m), F(S_{\tau_{l_D(m)}})\in\diff F(m))=\frac{F'(m)\diff m}{F(m)-F(m-l_D(m))}.
\end{equation}

Thanks to Lemma \ref{lem:reduction}, we have, up to this point, proved the following:
\begin{proposition}\label{prop:1}
When $S_0=X_0$, the function $V(s,s)$ for $\tau\in \S'$ can be represented by
\begin{eqnarray}\label{eq:V(s,s) integral form}
V(s,s)&=&\sup_{l_D}\int^\infty_s\frac{\varphi(s)}{\varphi(m-l_D(m))}\exp\left( -\int^m_s\frac{F'(u)\diff u}{F(u)-F(u-l_D(u))} \right)\\
&&\times\frac{F'(m)(g-\bar{f})(m-l_D(m),m)}{F(m)-F(m-l_D(m))}\diff m. \nonumber
\end{eqnarray}
\end{proposition}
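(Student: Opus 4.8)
The plan is to reduce the class of admissible stopping times, perform an excursion decomposition over the level at which stopping occurs, evaluate the resulting expectations under the measure change \eqref{eq:change-of-measure}, and then assemble the pieces. First I would invoke the optimality discussion of Section \ref{subsec:optimality} to restrict the supremum to threshold rules $\tau(D)\in\S'$, and then use Lemma \ref{lem:reduction} to replace each such rule by one in $\S'(b)$ without changing its value. On $\S'(b)$ one has $\tau(D)<\zeta$, so the absorption term $\1_{\{\zeta\le\tau\}}e^{-q\zeta}\bar{f}(X_\zeta,S_\zeta)$ in \eqref{eq:V-modified} vanishes and, since $S_0=X_0=s$, the problem becomes
\[
V(s,s)=\sup_{\tau(D)\in\S'(b)}\E^{s,s}\!\left[e^{-q\tau(D)}(g-\bar{f})(X_{\tau(D)},S_{\tau(D)})\right].
\]
Because $X$, and hence the excursion $S-X$, has continuous paths, $\tau(D)$ depends on $D(m)$ only through its infimum $l_D(m)$, so the outer supremum may be taken over admissible functions $l_D$ with $l_D(m)\in[0,b(m)]$.

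Next I would carry out the excursion decomposition. Stopping under $\tau(D)$ occurs during the excursion away from the running maximum at the level $m=S_{\tau(D)}$, so conditioning on $\{S_{\tau(D)}\in\diff m\}$ and applying the strong Markov property of $(X,S)$ at the first passage time $T_m$ factorizes the expectation exactly as in \eqref{eq:interim}: the first factor is the discounted probability $\E^{s,s}[e^{-qT_m}\1_{\{S_{\tau(D)}>m\}}]$ of climbing to level $m$ without being stopped earlier; the second is the deterministic payoff $(g-\bar{f})(m-l_D(m),m)$ evaluated at the known stopping position $X_{\tau(D)}=m-l_D(m)$; and the third is the discounted density $\E^{m,m}[e^{-q\tau_{l_D(m)}}\1_{\{S_{\tau_{l_D(m)}}\in\diff m\}}]$ of the triggering excursion at level $m$.

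The core is to evaluate the two expectations under the measure $\p^{\varphi}_{x,s}$ of \eqref{eq:change-of-measure}, which turns $F(X)$ into a scaled Brownian motion, hence a \lev process with explicit excursion structure. For the survival factor I would pass to $\p^{\varphi}_{s,s}$ via $\E^{s,s}[e^{-qT_m}\1_{\{S_{\tau(D)}>m\}}]=\frac{\varphi(s)}{\varphi(m)}\p^{\varphi}_{s,s}(S_{\tau(D)}>m)$, identify $\{S_{\tau(D)}>m\}$ with the event that no excursion from a level $u\in[s,m]$ is deep enough to enter $D(u)$, and apply the exponential formula for the Poisson point process $\eta$ of excursion heights with $\nu[u,\infty)=1/u$; the change of variable $y=F(u)$ then yields \eqref{eq:probability}. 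For the density factor I would again change measure and invoke Theorem 2 of Pistorius \cite{Pistorius_2007} for the joint law of the reflected process and its running maximum of the \lev process $F(X)$, obtaining $\E^{m,m}[e^{-q\tau_{l_D(m)}}\1_{\{S_{\tau_{l_D(m)}}\in\diff m\}}]=\frac{\varphi(m)}{\varphi(m-l_D(m))}\cdot\frac{F'(m)\diff m}{F(m)-F(m-l_D(m))}$.

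Finally I would substitute the two factors into the decomposition, observe that the $\varphi(m)$ terms cancel, integrate over $m\in(s,\infty)$ to recover the integrand displayed in \eqref{eq:V(s,s) integral form}, and take the supremum over admissible $l_D$. I expect the main obstacle to lie in the two excursion-theoretic identities of the previous paragraph: correctly matching $\{S_{\tau(D)}>m\}$ with a no-large-excursion event so that the exponential formula is applicable, translating the original-scale depth $l_D(u)$ into the natural-scale gap $F(u)-F(u-l_D(u))$ under the change of variable $y=F(u)$, and verifying that the hypotheses of Pistorius's theorem hold for $F(X)$. The remaining measure-change bookkeeping and the cancellation of the $\varphi(m)$ factors are routine by comparison.
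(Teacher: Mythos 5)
Your proposal follows the paper's own argument essentially step for step: reduction to $\S'(b)$ via Lemma \ref{lem:reduction}, the excursion decomposition of \eqref{eq:interim} over the level $m=S_{\tau(D)}$, the measure change \eqref{eq:change-of-measure} combined with the exponential formula for the excursion-height point process to obtain \eqref{eq:probability}, and Theorem 2 of Pistorius \cite{Pistorius_2007} for the density factor. The approach and all key identities match the paper's proof, so no further comparison is needed.
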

This proposition applies to general cases.  The following corollary can be shown directly from the above integral:
\begin{corollary}\label{cor:1}
If $(g-\bar{f})(x, s)$ does not depend on $s$, the function $V(s, s)$ reduces to
\begin{equation}\label{eq:simple-case-no-s}
  V(s, s)=\sup_{l_D(s)}\frac{\varphi(s)}{\varphi(s-l_D(s))}(g-\bar{f})(s-l_D(s),s),
\end{equation}
and $l^*_D(s)$ is the maximizer of the map $z\mapsto \frac{\varphi(s)}{\varphi(s-z)}
(g-\bar{f})(s-z, s)$.
\end{corollary}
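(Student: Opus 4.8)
The plan is to establish the corollary by sandwiching $V(s,s)$ between matching upper and lower bounds, working directly from the integral representation \eqref{eq:V(s,s) integral form} of Proposition \ref{prop:1}. The key structural observation is that when $(g-\bar f)(x,s)$ does not depend on $s$, the entire dependence of the integrand on the control collapses onto the single scalar $w:=m-l_D(m)$, the position of $X$ at which the excursion from level $m$ is stopped. Writing $k(w):=(g-\bar f)(w,\cdot)/\varphi(w)$ and $\lambda_{l_D}(u):=\frac{F'(u)}{F(u)-F(u-l_D(u))}$, the integrand of \eqref{eq:V(s,s) integral form} becomes $\varphi(s)\,k(m-l_D(m))$ times $\lambda_{l_D}(m)\exp\bigl(-\int_s^m\lambda_{l_D}(u)\diff u\bigr)$, which by \eqref{eq:probability} and the definition of $\p^{\varphi}_{s,s}$ in \eqref{eq:change-of-measure} is exactly the $\p^{\varphi}_{s,s}$-law of the terminal maximum $S_{\tau(D)}$ on $[s,\infty)$. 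Thus $V(s,s)=\varphi(s)\sup_{l_D}\E^{\varphi}_{s,s}\bigl[k\bigl(S_{\tau(D)}-l_D(S_{\tau(D)})\bigr)\bigr]$: the reward is the same function $k$ of the stopped position, \emph{irrespective} of the level at which stopping occurs, which is what decouples the infinitely many one-dimensional problems.

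For the upper bound I would dominate the integrand pointwise: since $k(m-l_D(m))\le \sup_w k(w)=:k(w^\ast)$ for every $m$, and the density integrates to $1-\exp\bigl(-\int_s^\infty\lambda_{l_D}\bigr)\le 1$, one gets $V(s,s)\le \varphi(s)\,k(w^\ast)$ (using that the maximal reward is nonnegative, which holds in the relevant setting since $g\ge 0$). Translating back through $w^\ast=s-z^\ast$, this bound is precisely $\sup_{z}\frac{\varphi(s)}{\varphi(s-z)}(g-\bar f)(s-z,s)$, the announced right-hand side.

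For the matching lower bound I would exhibit the stationary ``constant-target'' strategy $l_D(m):=m-w^\ast$, which aims every excursion at the same position $w^\ast$ regardless of $m$. Under it $k(m-l_D(m))\equiv k(w^\ast)$ factors out of the integral, and the remaining survival integral telescopes: the integrand $\frac{F'(m)}{F(m)-F(w^\ast)}\exp\bigl(-\int_s^m\frac{F'(u)\diff u}{F(u)-F(w^\ast)}\bigr)$ equals $-\frac{\diff}{\diff m}\exp\bigl(-\int_s^m\frac{F'(u)\diff u}{F(u)-F(w^\ast)}\bigr)$, so the integral evaluates to $1-\lim_{m\to\infty}\frac{F(s)-F(w^\ast)}{F(m)-F(w^\ast)}$, which is $1$ once $F(\infty)=\infty$. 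Hence this admissible strategy attains exactly $\varphi(s)\,k(w^\ast)$, meeting the upper bound, and reading off $l_D^\ast(s)=s-w^\ast=z^\ast$ identifies the maximizer.

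The step I expect to be delicate is the lower bound rather than the elementary upper bound. One must verify that the survival integral is exactly $1$, which is a recurrence/boundary statement equivalent to $F$ increasing to $+\infty$ at the right endpoint of $\mathcal{I}$, and one must check admissibility of the constant-target strategy, i.e. that the maximizer $w^\ast$ of $k$ is reachable from every level $m\ge s$ within the band $[m-b(m),m]$. It is exactly this feasibility (in particular that the maximizer sits at or below the starting diagonal, so that $\sup_w k$ coincides with the local maximum over $[s-b(s),s]$) that lets the function-valued optimization $\sup_{l_D}$ be replaced by the scalar optimization $\sup_{l_D(s)}$, collapsing the infinitely many coupled problems to the single pointwise maximization at the diagonal.
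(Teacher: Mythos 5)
Your probabilistic rewriting of the integral in Proposition \ref{prop:1} is correct and is surely what the paper means by ``shown directly from the above integral'' (the paper offers no proof at all): the integrand is $\varphi(s)\,k(m-l_D(m))$ integrated against the $\p^{\varphi}_{s,s}$-law of $S_{\tau(D)}$, with $k:=(g-\bar f)/\varphi$. The telescoping of the survival integral for the constant-target strategy is also right. But the two points you defer to the end are not merely ``delicate''; one of them is a genuine gap that cannot be closed at the stated level of generality. Your upper bound is $\varphi(s)\sup_{w}k(w)$ where $w$ ranges over \emph{all} attainable stopped positions $m-l_D(m)$ with $m\ge s$ --- a set that contains points $w>s$ (take $D(m)=\emptyset$ below some level $m_0>s$ and $0\in D(m_0)$, i.e.\ wait for a new maximum). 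The corollary's right-hand side is the supremum only over $w=s-l_D(s)\le s$. These coincide only if $k$ attains its supremum at or below $s$ (within $[s-b(s),s]$ when a boundary is present). For a payoff such as $(g-\bar f)(x)=(x-K)^{+}$ --- which satisfies the corollary's sole hypothesis of $s$-independence --- $k$ is increasing, the optimal $\S'$-strategy waits for a new maximum $x^{*}>s$, and $V(s,s)=\frac{\psi(s)}{\psi(x^{*})}(x^{*}-K)$ strictly exceeds $\sup_{z\ge 0}\frac{\varphi(s)}{\varphi(s-z)}(s-z-K)^{+}$. So the sandwich does not close without an additional hypothesis (e.g.\ that $(g-\bar f)/\varphi$ is maximized on $[s-b(s),s]$, as in the perpetual-put example where $\gamma(s-l^{*}_D(s))=0$); your proof should state this assumption rather than leave it implicit, since the statement is false without it.

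The lower bound has a second, distinct admissibility problem. The constant-target strategy $l_D(m)=m-w^{*}$ is nothing but $T^{-}_{w^{*}}$, so its value $\frac{\varphi(s)}{\varphi(w^{*})}(g-\bar f)(w^{*})$ follows from the elementary identity $\E^{s}[e^{-qT^{-}_{w^{*}}}]=\varphi(s)/\varphi(w^{*})$ with no need for the telescoping computation; but membership in $\S'(b)$ requires $m-w^{*}\le b(m)$ for every level $m\ge s$ the maximum may visit before $X$ falls to $w^{*}$. This fails precisely for boundaries of the form used in the paper's own application, $b(m)=(1-\beta)m$, where $m-b(m)=\beta m\to\infty$: from a high level $m$ the process is absorbed at $\beta m>w^{*}$ before it can reach $w^{*}$. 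So in the presence of such a boundary the exhibited strategy does not attain the claimed value, and the matching lower bound needs a different argument (or the corollary must be read as a statement about the unconstrained case $b\equiv+\infty$, with the constrained maximizer in the application justified separately via Proposition \ref{prop:2}). In short: correct skeleton, correct computations, but the reduction from the function-valued supremum $\sup_{l_D}$ to the scalar $\sup_{l_D(s)}$ --- which is the entire content of the corollary --- is asserted rather than proved, and requires hypotheses the statement does not supply.
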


We wish to
obtain more explicit formulae for $V(s,s)$.
For this purpose, let us denote\[
H(u; l_D):=\frac{F'(u)}{F(u)-F(u-l_D(u))}, \conn G(u; l_D):=(g-\bar{f})(u-l_D(u),u),
\]
to avoid the long expression and
rewrite \eqref{eq:V(s,s) integral form} in the following way:
for any $m-s>\epsilon > 0$,
\begin{eqnarray*}
V(s, s)&=&\sup_{l_D}\left[\int_{s}^{s+\epsilon}\frac{\varphi(s)}{\varphi(m-l_D(m))}\exp\left(-\int_s^{m}H(u; l_D)\diff u\right)H(m; l_D)G(m; l_D)\diff m\right.\\
&&+\left.\frac{\varphi(s)}{\varphi(s+\epsilon)}\exp\left(-\int_s^{s+\epsilon}H(u; l_D)\diff u\right)\right.\\
&&\hspace{0.8cm}\left.\times\int_{s+\epsilon}^\infty\frac{\varphi(s+\epsilon)}{\varphi(m-l_D(m))}\exp\left(-\int_{s+\epsilon}^{m}H(u; l_D)\diff u\right)H(m; l_D)G(m; l_D)\diff m\right]\\
&=&\sup_{l_D}\left[\int_{s}^{s+\epsilon}\frac{\varphi(s)}{\varphi(m-l_D(m))}\exp\left(-\int_s^{m}H(u; l_D)\diff u\right)H(m; l_D)G(m; l_D)\diff m\right.\\
&&+\left.\frac{\varphi(s)}{\varphi(s+\epsilon)}\exp\left(-\int_s^{s+\epsilon}H(u; l_D)\diff u \right) V(s+\epsilon, s+\epsilon)\right].
\end{eqnarray*}
This expression naturally motivates us to set $V_\epsilon : \R\mapsto\R$ as
\begin{equation}\label{eq:V-epsilon}
V_\epsilon(s):=\sup_{l_D(s)}\left[ \frac{\varphi(s)}{\varphi(s+\epsilon)}\exp\left(-\epsilon H(s; l_D)\right)V(s+\epsilon,s+\epsilon)+\frac{\varphi(s)}{\varphi(s-l_D(s))}\cdot\epsilon H(s; l_D)G(s; l_D)\right]
\end{equation} and we have $\lim_{\epsilon\downarrow 0}V_\epsilon (s)=V(s,s)$.
Dividing both sides by $\varphi(s)$ and choosing the optimal level $l_D^*(s)$,  we have, for any $s$ given,
\begin{equation}\label{eq:V-epsilon-in-F}
  \frac{V_\epsilon(s)}{\varphi(s)}=\left[\frac{V(s+\epsilon, s+\epsilon)}{\varphi(s+\epsilon)}e^{-\epsilon H(s; l^*_D)}+\frac{\epsilon H(s; l^*_D)G(s; l^*_D)}{\varphi(s-l^*_D(s))}\right].
\end{equation}
Let us consider the transformation of a Borel function $z$ defined on $-\infty\le c\le x\le d\le \infty$ through
\begin{equation}\label{eq:F-trans}
Z(y):=\frac{z}{\varphi}\circ F^{-1}(y)
\end{equation} on $[F(c), F(d)]$ where $F^{-1}$ is the inverse of the strictly increasing $F(\cdot)$ in \eqref{eq:F}.
 If we evaluate $Z$ at $y=F(x)$, we obtain $Z(F(x))=\frac{z(x)}{\varphi(x)}$, which is the form that appears in \eqref{eq:V-epsilon-in-F}. Note that
 \begin{equation}\label{eq:derivative}
  Z'(y)=q'(x) \quad \text{where}\word q'(x)=\frac{1}{F'(x)}\left(\frac{z}{\varphi}\right)'(x)
 \end{equation}

To make explicit calculations possible, we consider the case where the reward increases as $S$ does, a natural problem formulation. %smooth-fit principle holds at $s-l_D(s)$ where $l_D(s)$ is the closest point to $s$ such that $\tau_{\l_D(s)}$ is in $\S'(b)$.  Recall \eqref{eq:lD} for the definition.

\begin{proposition}\label{prop:2}
Fix $s\in \mathcal{I}$.  If (1) the reward function $(g-\bar{f})(x, s)$ is nondecreasing in the second argument and (2) $\frac{\varphi(s)}{\varphi(s+\epsilon)}\frac{\varphi'(s+\epsilon)}{\varphi'(s)}<1$ for all $s\in \II$ and $\epsilon>0$, we have
%Assume  that $q\geq 0$ and $\sigma(x)>0$ for any $x\in \R_+$.  Then we have %  and further assume that the smooth-fit principle holds at $s-l^*(D)$, then  we have
  \begin{equation}\label{eq:V-explicit}
V(s,s)=\frac{\varphi(s)}{\varphi(s-l^*_D(s))}\cdot Q(s; l_D^*) \cdot (g-\bar{f})(s-l^*_D(s),s),
\end{equation}
where
\[
Q(u; l_D):=\frac{F'(u)\varphi'(u)}{\varphi''(u)[F(u)-F(u-l^*_D(u))]+F'(u)\varphi'(u)}
\]
and $l^*_D(s)$ is the maximizer of the map \[z \mapsto
\frac{\varphi(s)}{\varphi(s-z)}\cdot\frac{F'(s)\varphi'(s)}{\varphi''(s)[F(s)-F(s-z)]+F'(s)\varphi'(s)}\cdot(g-\bar{f})(s-z,s)\quad \text{on}\word [0,b].\]
\end{proposition}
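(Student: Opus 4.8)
The plan is to pass to the limit $\epsilon\downarrow 0$ in the dynamic-programming recursion \eqref{eq:V-epsilon}--\eqref{eq:V-epsilon-in-F} and thereby turn the implicit optimization over the threshold $l_D(s)$ into the explicit pointwise maximization in the statement. Write $\Phi(s):=V(s,s)/\varphi(s)$, so that, using $V(s+\epsilon,s+\epsilon)/\varphi(s+\epsilon)=\Phi(s+\epsilon)$, equation \eqref{eq:V-epsilon-in-F} reads $V_\epsilon(s)/\varphi(s)=\Phi(s+\epsilon)e^{-\epsilon H(s;l^*_D)}+\epsilon H(s;l^*_D)G(s;l^*_D)/\varphi(s-l^*_D(s))$. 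First I would Taylor-expand $\Phi(s+\epsilon)$ and $e^{-\epsilon H}$, subtract $\Phi(s)$ (recalling $\lim_{\epsilon\downarrow0}V_\epsilon(s)=V(s,s)$), divide by $\epsilon$, and let $\epsilon\downarrow0$. Writing $R(s;z):=G(s;z)/\varphi(s-z)=(g-\bar{f})(s-z,s)/\varphi(s-z)$, this yields the optimality relation $\Phi'(s)=H(s;l^*_D)\bigl(\Phi(s)-R(s;l^*_D)\bigr)$, in which $l^*_D(s)$ attains $\inf_z H(s;z)\bigl(\Phi(s)-R(s;z)\bigr)$ over $z\in[0,b(s)]$, with attendant first-order condition $\partial_z\bigl\{H(s;z)(\Phi(s)-R(s;z))\bigr\}=0$.

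The core of the argument is then to eliminate both $\Phi'(s)$ and the spatial derivative of the reward between the optimality relation and its first-order condition, so as to obtain a closed expression for $\Phi(s)$. Combining the two gives $\Phi(s)=R(s;l^*_D)+H(s;l^*_D)\,\partial_z R(s;l^*_D)/\partial_z H(s;l^*_D)$, which still contains the reward derivative $\partial_z R$. I would remove it by recognizing this as the stationarity condition of the explicit map $z\mapsto \frac{\varphi(s)}{\varphi(s-z)}Q(s;z)(g-\bar{f})(s-z,s)$. The bridge is the algebraic identity $\partial_z Q(s;z)=\dfrac{\partial_z H(s;z)\,\varphi'(s)\varphi''(s)}{\bigl(H(s;z)\varphi'(s)+\varphi''(s)\bigr)^2}$, checked directly from the definition of $Q$; combined with $\partial_z H(s;z)=-\tfrac{F'(s-z)}{F'(s)}H(s;z)^2$, it shows that at any stationary point $z=l^*_D(s)$ of that map the first-order condition above is satisfied and evaluating the relation returns $\Phi(s)=Q(s;l^*_D)R(s;l^*_D)$, i.e. \eqref{eq:V-explicit}. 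This is precisely where the curvature term $\varphi''$ enters. Equivalently, the whole computation can be phrased through the $F$-transform \eqref{eq:F-trans}--\eqref{eq:derivative}, where the relation becomes the geometric statement that $(V(\cdot,\cdot)/\varphi)\circ F^{-1}$ lies on the tangent to the transformed reward at the stopping point.

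The two hypotheses enter as follows. Condition (1), that $(g-\bar{f})(x,s)$ is nondecreasing in $s$, is what renders $s\mapsto V(s,s)$ (equivalently $\Phi$) nondecreasing; this legitimizes restricting the admissible thresholds to $[0,b(s)]$ and fixes the sign of the terms carried across the level $s\to s+\epsilon$, so that the whole-trajectory control on $[s,\infty)$ reduces to the pointwise maximization. Condition (2), $\frac{\varphi(s)}{\varphi(s+\epsilon)}\frac{\varphi'(s+\epsilon)}{\varphi'(s)}<1$ for all $\epsilon>0$ --- a discrete log-convexity of the decreasing solution $\varphi$, whose infinitesimal form is $\varphi\varphi''>(\varphi')^2$ --- guarantees that $H(s;z)\varphi'(s)+\varphi''(s)$ keeps a constant sign, hence that $Q(s;z)\in(0,1)$ and, crucially, that the stationary point delivered by the first-order condition is a genuine maximum rather than a saddle or minimum (the second-order condition).

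I expect the principal obstacle to be the rigorous justification of the $\epsilon\downarrow0$ limit. Because the recursion couples $V(s,s)$ to $V(s+\epsilon,s+\epsilon)$, the limiting statement is not a single static optimization but a limit of dynamic-programming equations; making it rigorous requires the differentiability of $s\mapsto V(s,s)$, the interchange of $\lim_{\epsilon\downarrow0}$ with $\sup_{l_D(s)}$, and the convergence of the optimal $\epsilon$-threshold to the maximizer $l^*_D(s)$ of the explicit map. The envelope-type manipulation of the second step is only formal until these are in place, and it is there --- together with the second-order verification supplied by Condition (2) --- that the genuine work lies.
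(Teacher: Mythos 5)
Your infinitesimal reduction of the recursion \eqref{eq:V-epsilon} is the right starting point, and your algebra is sound: with $\Phi(s):=V(s,s)/\varphi(s)$ and $R(s;z):=(g-\bar{f})(s-z,s)/\varphi(s-z)$ the expansion does give $\Phi'(s)=H(s;l^*_D)\bigl(\Phi(s)-R(s;l^*_D)\bigr)$, and your identities $\partial_z H=-\tfrac{F'(s-z)}{F'(s)}H^2$ and $\partial_z Q=\varphi'\varphi''\,\partial_z H/(H\varphi'+\varphi'')^2$ are correct. The gap is in the closing step. You have three unknowns --- $\Phi(s)$, $\Phi'(s)$ and $l^*_D(s)$ --- but only two relations: the envelope identity above and the first-order condition in $z$, which together yield $\Phi=R+H\,\partial_z R/\partial_z H$. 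To eliminate $\partial_z R$ you then ``recognize'' this as the stationarity condition of $z\mapsto\frac{\varphi(s)}{\varphi(s-z)}Q(s;z)(g-\bar{f})(s-z,s)$; but that recognition presupposes that the DP-optimal threshold is a stationary point of precisely that map, which is part of the conclusion of the proposition, not something you have established. Your identities only show the formula is \emph{self-consistent}: if $\Phi=QR$ then the two stationarity conditions coincide, and conversely. What is missing is an independent third relation that pins down $\Phi$; equivalently, your relation $\Phi'=H(\Phi-R)$ is an ODE, and without a further input you cannot extract a pointwise algebraic formula from it. (The interior FOC also silently excludes the boundary case $l^*_D(s)=0$, which genuinely occurs, e.g.\ in \eqref{Vss}.)

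The paper supplies exactly this missing relation as Lemma \ref{lem:epsilon}: for small $\epsilon$, $\frac{V_\epsilon(s)}{\varphi(s)}=\frac{\varphi'(s+\epsilon)}{\varphi'(s)}\cdot\frac{V(s+\epsilon,s+\epsilon)}{\varphi(s+\epsilon)}$, whose infinitesimal form is $\Phi'(s)=-\frac{\varphi''(s)}{\varphi'(s)}\Phi(s)$. Substituted into $\Phi'=H(\Phi-R)$ this gives $\Phi(H\varphi'+\varphi'')=HR\varphi'$, i.e.\ $\Phi=QR$, with no appeal to the first-order condition in $z$ at all; the paper reaches the same point by L'H\^opital applied to \eqref{eq:V_epsilon}. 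The lemma is proved by a squeeze argument combining the concave-majorant characterization of the one-step problem \eqref{eq:new-osp} (giving $\frac{V_\epsilon(s)}{\varphi(s)}\le\alpha_s(\epsilon)\frac{V(s+\epsilon,s+\epsilon)}{\varphi(s+\epsilon)}$ for small $\epsilon$) with the regularity of the diffusion at $s$ and hypothesis (2) (giving the reverse inequality). Note also that this is where hypothesis (2) actually enters --- it converts $V_\epsilon(s)\ge V(s+\epsilon,s+\epsilon)$ into the needed lower bound --- rather than serving as a second-order condition or a sign condition on $Q$ as you propose. So your difficulty is not only the limit interchanges you flag at the end; it is that an entire structural ingredient, the analogue of Lemma \ref{lem:epsilon}, has to be produced before the envelope manipulation can determine $V(s,s)$.
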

Note that it can be confirmed that diffusions that satisfy the second assumption include geometric Brownian motion, Ornstein-Uhlenbeck process, etc.
\begin{proof}
First, we claim the following statement:
\begin{lemma}\label{lem:epsilon} Under the assumption of Proposition \ref{prop:2}, for $\epsilon>0$ sufficiently close to zero, we have
\begin{equation} \label{eq:V-V}
    \frac{V_\epsilon(s)}{\varphi(s)}=\alpha_s(\epsilon)\cdot\frac{V(s+\epsilon, s+\epsilon)}{\varphi(s+\epsilon)} \quad \word\text{where}\word
    \alpha_s(\epsilon):=\frac{\varphi'(s+\epsilon)}{\varphi'(s)}.
    %\frac{V_\epsilon(s)}{\varphi(s)}=\left(1+\epsilon\cdot\frac{\varphi''(s)}{\varphi'(s)}\right)\frac{V(s+\epsilon)}{\varphi(s+\epsilon)}
\end{equation}
\end{lemma}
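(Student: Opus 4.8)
The plan is to read \eqref{eq:V-epsilon} as a one--parameter optimization in the single real variable $z:=l_D(s)\in[0,b(s)]$, with the continuation value $W:=V(s+\epsilon,s+\epsilon)/\varphi(s+\epsilon)$ frozen. Writing $h(z):=H(s;z)=F'(s)/\bigl(F(s)-F(s-z)\bigr)$ and $\rho(z):=G(s;z)/\varphi(s-z)$, the bracket in \eqref{eq:V-epsilon-in-F} is $\Phi(z)=W e^{-\epsilon h(z)}+\epsilon\,h(z)\rho(z)$, so I would first locate the maximizer $z^{*}=l^{*}_D(s)$ through the stationarity condition $\Phi'(z^{*})=0$, i.e. $W\,h'(z^{*})e^{-\epsilon h(z^{*})}=\bigl(h\rho\bigr)'(z^{*})$. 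This eliminates the exponential continuation factor, since $W e^{-\epsilon h(z^{*})}=\bigl(h\rho\bigr)'(z^{*})/h'(z^{*})$, and leaves $\Phi(z^{*})=\bigl(h\rho\bigr)'(z^{*})/h'(z^{*})+\epsilon\,h(z^{*})\rho(z^{*})$.

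The second and decisive step is to convert this stationary value into the ratio $\alpha_s(\epsilon)=\varphi'(s+\epsilon)/\varphi'(s)$. Here I would expand in $\epsilon$, tracking the $\epsilon$--dependence of $z^{*}=z^{*}(\epsilon)$ and invoking the fundamental ODE $(\A-q)\varphi=0$, equivalently $\tfrac12\sigma^2(s)\varphi''(s)+\mu(s)\varphi'(s)=q\varphi(s)$, together with the Wronskian identity $F'(s)\varphi(s)^2=\psi'(s)\varphi(s)-\psi(s)\varphi'(s)$ that links $F'$ to $\varphi$ and $\varphi'$ through the definition \eqref{eq:F}. The mechanism I would aim to exhibit is that the level--dependence of $W=V(s+\epsilon,s+\epsilon)/\varphi(s+\epsilon)$, together with the stopping term, collapses to the first--order expansion $\alpha_s(\epsilon)=1+\epsilon\,\varphi''(s)/\varphi'(s)+O(\epsilon^2)$ of the claimed factor; matching the logarithmic increment $\varphi''(s)/\varphi'(s)$ is what forces the two contributions in \eqref{eq:V-epsilon-in-F} to reassemble into $\alpha_s(\epsilon)W$, which is \eqref{eq:V-V}.

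The two hypotheses of Proposition \ref{prop:2} enter exactly where the optimization could misbehave. Monotonicity of $(g-\bar{f})(x,s)$ in $s$ (assumption (1)) keeps $\rho$ ordered across neighbouring levels, so that $z^{*}(\epsilon)$ varies continuously and sits in the interior rather than collapsing to a degenerate endpoint. The ratio condition (assumption (2)), $\tfrac{\varphi(s)}{\varphi(s+\epsilon)}\tfrac{\varphi'(s+\epsilon)}{\varphi'(s)}<1$, whose first--order form is the strict log--convexity $\varphi\varphi''>(\varphi')^2$ of $\varphi$, is the sign and second--order condition guaranteeing that the critical point is a genuine maximum and that $\alpha_s(\epsilon)<1$, so that \eqref{eq:V-V} is consistent with $V_\epsilon(s)/\varphi(s)$ lying below the bare continuation level $W$.

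I expect the main obstacle to be the well--posedness of the supremum in \eqref{eq:V-epsilon} and the promotion of the first--order matching to the exact identity asserted in \eqref{eq:V-V}. The integrand behaves delicately as $z\downarrow 0$, where $h(z)\to\infty$ and the left--endpoint discretization underlying \eqref{eq:V-epsilon} no longer tracks the decaying exponential $\exp(-\int_s^m H\,\diff u)$ of the exact expression; controlling this regime, and showing that assumption (2) excludes it so that the maximizer indeed sits at the smooth--fit point producing $\varphi'(s+\epsilon)/\varphi'(s)$, is the crux of the argument. Once the maximizer is pinned down and shown to be interior, the remaining simplification via the $\varphi$--ODE and the definition of $F$ is routine, if lengthy.
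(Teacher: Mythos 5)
There is a genuine gap, and you have in fact named it yourself: your plan rests on expanding both sides of \eqref{eq:V-V} to first order in $\epsilon$ and "matching the logarithmic increment $\varphi''(s)/\varphi'(s)$." That can at best show the two sides agree up to $o(\epsilon)$ as $\epsilon\downarrow 0$; the lemma asserts an \emph{exact} identity $V_\epsilon(s)/\varphi(s)=\alpha_s(\epsilon)\,V(s+\epsilon,s+\epsilon)/\varphi(s+\epsilon)$ for every sufficiently small $\epsilon>0$, and this exact identity is what the proof of Proposition \ref{prop:2} subsequently divides by $1-\frac{\varphi'(s)}{\varphi'(s+\epsilon)}e^{-\epsilon H}$ before taking the limit. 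The "promotion of the first-order matching to the exact identity," which you defer as the crux, is precisely the content of the lemma, so the proposal does not contain a proof. A secondary issue is the role you assign to the hypotheses: assumption (2) is not a second-order condition certifying that the stationary point of $\Phi$ is a maximum, and the stationarity equation $\Phi'(z^*)=0$ need not hold (the maximizer over $[0,b(s)]$ can sit at an endpoint); in the paper assumption (2) is used only to convert an inequality between unnormalized values into the ratio inequality involving $\alpha_s(\epsilon)$.

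The paper's route is a two-sided sandwich rather than a stationarity computation. For the upper bound, \eqref{eq:V-epsilon} is read probabilistically as the one-dimensional stopping problem \eqref{eq:new-osp}; in the $(F(x),z/\varphi(x))$-plane its value is the smallest concave majorant of $(g-\bar f)(\cdot,s)/\varphi(\cdot)$ through the point $\bigl(F(s+\epsilon),V(s+\epsilon,s+\epsilon)/\varphi(s+\epsilon)\bigr)$, whence $V_\epsilon(s)/\varphi(s)\le V(s+\epsilon,s+\epsilon)/\varphi(s+\epsilon)$, and a continuity/monotonicity argument sharpens this to $V_\epsilon(s)/\varphi(s)\le\alpha_s(\epsilon)V(s+\epsilon,s+\epsilon)/\varphi(s+\epsilon)$ for small $\epsilon$. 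For the lower bound, one evaluates the specific suboptimal strategy that lets the excursion run to the left boundary, which via \eqref{eq:probability} gives $V_\epsilon(s)\ge\frac{\psi(s)}{\psi(s+\epsilon)}V(s+\epsilon,s+\epsilon)=\E^{s,s}\bigl[e^{-qT_{s+\epsilon}}\bigr]V(s+\epsilon,s+\epsilon)$, hence $V_\epsilon(s)\ge V(s+\epsilon,s+\epsilon)$ for $\epsilon$ small because $s$ is a regular point; assumption (2) then yields the reverse inequality $V_\epsilon(s)/\varphi(s)\ge\alpha_s(\epsilon)V(s+\epsilon,s+\epsilon)/\varphi(s+\epsilon)$. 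Your proposal produces nothing playing the role of this lower bound, and without both inequalities the exact equality \eqref{eq:V-V} cannot be reached. If you want to salvage your approach, you would need to replace the Taylor matching by explicit two-sided estimates valid for each fixed small $\epsilon$, which effectively reconstructs the paper's argument.
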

\noindent Note that $\alpha_s(\epsilon)\in (0, 1)$ for all $s\in \mathcal{I}$ and $\epsilon>0$ and that $\alpha_s(\epsilon)\uparrow 1$ for all $s\in \mathcal{I}$.
\begin{proof}(of the lemma)
Recall \eqref{eq:lD} for the definition of $l_D(s)$.
In view of \eqref{eq:probability}, the probabilistic meaning of \eqref{eq:V-epsilon} is that $V_\epsilon(s)$ is attained when one chooses the excursion level $l_D(s)$ optimally in the following optimal stopping:
\begin{equation}\label{eq:new-osp}
  V_\epsilon(s)=\sup_{l_D(s)}\E^{s, s}[e^{-qT_{s+\epsilon}}\1_{\{T_{s+\epsilon}\le \tau_{s-l_D(s)}\}}V(s+\epsilon, s+\epsilon)+e^{-q\tau_{s-l_D(s)}}\1_{\{T_{s+\epsilon}> \tau_{s-l_D(s)}\}}(g-\bar{f})(s-l_D(s), s)],
\end{equation}
that is, if the excursion from $s$ does not reach the level of $l_D(s)$ before $X$ reaches $s+\epsilon$, one shall receive $V(s+\epsilon, s+\epsilon)$ and otherwise, one shall receive the reward. By using the transformation \eqref{eq:F-trans}, one needs to consider the function $\frac{(g-\bar{f})(x, s)}{\varphi(x)}$ and the point $\left(F(s+\epsilon), \frac{V(s+\epsilon, s+\epsilon)}{\varphi(s+\epsilon)}\right)$ in the $(F(x), z(x)/\varphi(x))$-plane. Then the value function of \eqref{eq:new-osp} in this plane is the smallest concave majorant of $\frac{(g-\bar{f})(x, s)}{\varphi(x)}$ which passes through the point $\left(F(s+\epsilon), \frac{V(s+\epsilon, s+\epsilon)}{\varphi(s+\epsilon)}\right)$. It follows that $\frac{V_\epsilon(s)}{\varphi(s)}\le  \frac{V(s+\epsilon)}{\varphi(s+\epsilon)}$.  As $\epsilon\downarrow 0$, it is clear that $\frac{V(s+\epsilon)}{\varphi(s+\epsilon)}\downarrow \frac{V(s, s)}{\varphi(s)}$ and $\frac{V_\epsilon(s)}{\varphi(s)}\downarrow \frac{V(s,s)}{\varphi(s)}$. Suppose, for  a contradiction, that we  have
\begin{equation}\label{eq:for-contradiction}
\alpha_s(\epsilon)\frac{V(s+\epsilon, s+\epsilon)}{\varphi(s+\epsilon)}<\frac{V_\epsilon(s)}{\varphi(s)}<\frac{V(s+\epsilon, s+\epsilon)}{\varphi(s+\epsilon)},
\end{equation} for all $\epsilon>0$.
This implies that the first term goes to $\frac{V(s,s)}{\varphi(s)}$ from below and the third term goes to the same limit from above.  While the second inequality always hold, the first inequality leads to a contradiction to the fact that the function $\epsilon\mapsto (1-\alpha_s(\epsilon))\frac{V(s+\epsilon, s+\epsilon)}{\varphi(s+\epsilon)}$ is continuous for all $s$.  Indeed, due to the monotonicity of $\alpha_s(\epsilon)\frac{V(s+\epsilon, s+\epsilon)}{\varphi(s+\epsilon)}$ in $\epsilon$, we would have $\frac{V_\epsilon(s)}{\varphi(s)}>\frac{V(s, s)}{\varphi(s)}>\alpha_s(\epsilon)\frac{V(s+\epsilon, s+\epsilon)}{\varphi(s+\epsilon)}$ for all $\epsilon>0$.  Hence one cannot make the distance between $\frac{V(s+\epsilon, s+\epsilon)}{\varphi(s+\epsilon)}$ and $\alpha_s(\epsilon)\frac{V(s+\epsilon, s+\epsilon)}{\varphi(s+\epsilon)}$ arbitrarily small without violating \eqref{eq:for-contradiction}.
This shows that there exists an $\epsilon'=\epsilon'(s)$ such that $\epsilon<\epsilon'$ implies that $\frac{V_\epsilon(s)}{\varphi(s)}\le \alpha_s(\epsilon)\frac{V(s+\epsilon, s+\epsilon)}{\varphi(s+\epsilon)}$.

On the other hand,  in \eqref{eq:new-osp}, one could choose a stopping time $\tau_{l_{D}(s)}$ that visits the left boundary $l$, then by reading \eqref{eq:probability} with $l_D(u)=u$ and $m=s+\epsilon$, \eqref{eq:new-osp} becomes
\[
V_\epsilon(s)\ge \frac{\varphi(s)}{\varphi(s+\epsilon)}\frac{F(s)}{F(s+\epsilon)}V(s+\epsilon, s+\epsilon)=\frac{\psi(s)}{\psi(s+\epsilon)}V(s+\epsilon, s+\epsilon)=\E^{s, s}(e^{-qT_{s+\epsilon}})V(s+\epsilon, s+\epsilon)
\] for \emph{any} $\epsilon>0$.
Since $s\in \mathcal{I}$ is a regular point, the last expectation can be arbitrarily close to unity (see page 89 \cite{IM1974}): that is, there exists an $\epsilon''=\epsilon''(s)>0$ such that $\epsilon<\epsilon''$ implies that  so that $V_\epsilon(s)\ge V(s+\epsilon, s+\epsilon)$.  By using the second assumption in the statement of Proposition, for any $s$, we have
$\frac{V_\epsilon(s)}{\varphi(s)}\ge \alpha_s(\epsilon)\frac{V(s+\epsilon, s+\epsilon)}{\varphi(s+\epsilon)}$ for $\epsilon<\epsilon''$. This completes the proof of Lemma \ref{lem:epsilon}.
\end{proof}

Let us continue the proof of Proposition \ref{prop:2}. By using \eqref{eq:V-V} in Lemma \ref{lem:epsilon}, we can write, for $\epsilon$ small,
\begin{equation}\label{eq:V_epsilon}
V_\epsilon(s)-\frac{\varphi(s)}{\varphi(s+\epsilon)}\exp\left(-\epsilon H(s; l^*_D)\right)V(s+\epsilon, s+\epsilon)=\left(1-\frac{\varphi'(s)}{\varphi'(s+\epsilon)}\exp\left(-\epsilon H(s; l^*_D)\right)\right)V_\epsilon(s).
\end{equation}
Moreover, since $\lim_{\epsilon\downarrow 0}V(s+\epsilon,s+\epsilon)=V(s,s)$, the optimal threshold $l^*_D(s)$ should satisfy
\begin{equation*}
V(s, s)=\lim_{\epsilon\downarrow 0}V_\epsilon(s)=\lim_{\epsilon\downarrow 0}\left[ \frac{\varphi(s)}{\varphi(s+\epsilon)}\exp\left(-\epsilon H(s; l^*_D)\right)V(s+\epsilon,s+\epsilon)
+\frac{\varphi(s)}{\varphi(s-l^*_D(s))}\cdot\epsilon H(s; l^*_D)G(s; l^*_D)\right],
\end{equation*}
%\begin{eqnarray*}
%\lim_{\epsilon\downarrow 0}V_\epsilon(s)&=&\lim_{\epsilon\downarrow 0}\left[ \frac{\varphi(s)}{\varphi(s+\epsilon)}\exp\left(-\frac{\epsilon F'(s)\diff u}{F(s)-F(s-l^*_D(s))}\right)V(s+\epsilon,s+\epsilon)\right.\\
%&&\left.+\frac{\varphi(s)}{\varphi(s-l^*_D(s))}\cdot\frac{\epsilon F'(s)(g-\bar{f})(s-l^*_D(s),s)}{F(s)-F(s-l^*_D(s))} \right].
%\end{eqnarray*}
from which equation, in view of \eqref{eq:V_epsilon}, we obtain
\begin{eqnarray*}
V(s,s)&=&\lim_{\epsilon\downarrow 0}\frac{ V_\epsilon(s)-\frac{\varphi(s)}{\varphi(s+\epsilon)}\exp\left(-\epsilon H(s; l^*_D)\right)V(s+\epsilon,s+\epsilon) }
{ 1-\frac{\varphi'(s)}{\varphi'(s+\epsilon)}\exp\left(-\epsilon H(s; l^*_D)\right) }\\
%&=&\lim_{\epsilon\downarrow 0}\frac{ \frac{\varphi(s)}{\varphi(s-l^*_D(s))}\cdot\frac{\epsilon F'(s)(g-\bar{f})(s-l^*_D(s),s)}{F(s)-F(s-l^*_D(s))} }{ 1-\frac{\varphi'(s)}{\varphi'(s+\epsilon)}\exp\left(-\frac{\epsilon %F'(s)}{F(s)-F(s-l^*_D(s))}\right) }\\
&=&\lim_{\epsilon\downarrow 0}\frac{ \frac{\varphi(s)}{\varphi(s-l^*_D(s))}\cdot\epsilon H(s; l^*_D)G(s; l^*_D) }
{ 1-\frac{\varphi'(s)}{\varphi'(s+\epsilon)}\exp\left(-\epsilon H(s; l^*_D)\right) }\\
&=&\frac{\varphi(s)}{\varphi(s-l^*_D(s))}
\frac{F'(s)\varphi'(s)}{\varphi''(s)[F(s)-F(s-l^*_D(s))]+F'(s)\varphi'(s)}
(g-\bar{f})(s-l^*_D(s), s),
\end{eqnarray*}
where the last equality is obtained by L'H\^{o}pital's rule, and hence $l^*_D(s)$ is the value which gives the supremum to $\frac{\varphi(s)}{\varphi(s-z)}
Q(s;z)
(g-\bar{f})(s-z, s)$.
\end{proof}

\begin{remark}\normalfont  Let us slightly abuse the notation by writing
$\frac{F'(s)\varphi'(s)}{\varphi''(s)[F(s)-F(s-z)]+F'(s)\varphi'(s)}=Q(s; z)$ to avoid the long expression.
Note that {\rm $\frac{\varphi(s)}{\varphi(s-z)}Q(s;z)(g-\bar{f})(s-z,s)$ is the value corresponding to the strategy $D$ with $l_D(s)=z$ and $l_D(m)=l^*_D(m)$ for every $m>s$; that is, this amount is obtained when we stop if $X$ goes below $s-z$ in the excursion at level $S=s$ and behave optimally at all  the higher levels $S>s$.}
\end{remark}
\begin{remark}\label{rem:shiryaev}\normalfont
Another representation of $V(s, s)$ is still possible. Continue to fix $s\in \II$.  The smooth-fit principle is assumed to hold at an optimal
point $s-l^*_D(s)$ and we have $(s-l_D^*(s), s)$ as a continuation region with the line $L_s$ tangent to the function $(g-\bar{f})(s-l_D(s), s)/\varphi(s-l^*_D(s))$ at $y=F(s-l^*_D(s))$.  Then we have a first-order approximation of $\frac{V(s+\epsilon, s+\epsilon)}{\varphi(s+\epsilon)}$:
    \begin{equation}\label{eq:approx}
    \frac{V(s+\epsilon, s+\epsilon)}{\varphi(s+\epsilon)}=\frac{V(s, s)}{\varphi(s)}+\gamma(s-l^*_D(s))\cdot \epsilon F'(s)
    \end{equation} where \begin{equation*}\label{eq:gamma}
    \gamma(s-l^*_D(s)):=\frac{1}{F'(s-l^*_D(s))}\left(\frac{(g-\bar{f})(s-l^*_D(s), s)}{\varphi(s-l^*_D(s))}\right)'\ge 0,
    \end{equation*}
    the slope of $L_s(y)$ at $F(s-l^*_D(s))$. See \eqref{eq:derivative}.
Equate equation \eqref{eq:approx} with \eqref{eq:V-epsilon-in-F} to obtain, for any $\epsilon$ sufficiently close to zero,
\[
\left(\frac{V_\epsilon(s)}{\varphi(s)}-\frac{\epsilon H(s; l^*_D)(g-\bar{f})(s- l^*_D(s), s)}{\varphi(s-l^*_D(s))}\right)e^{\epsilon H(s; l^*_D)}
=\frac{V(s, s)}{\varphi(s)}+\gamma(s-l^*_D(s))\cdot \epsilon F'(s).
\] Since $V_\epsilon(s)\mapsto V(s)$ as $\epsilon\rightarrow 0$,
\begin{align}\label{eq:for-special-V}
  \frac{V(s, s)}{\varphi(s)}&=\lim_{\epsilon\downarrow 0} \frac{\gamma(s-l^*_D(s))\cdot \epsilon F'(s)+\epsilon e^{\epsilon H(s; l^*_D)} H(s; l^*_D)\frac{(g-\bar{f})(s- l^*_D(s), s)}{\varphi(s-l^*_D(s))} }{e^{\epsilon H(s; l^*_D)}-1}\nonumber\\
  &=\frac{\gamma(s-l^*_D(s)) F'(s)+ H(s; l^*_D)\frac{(g-\bar{f})(s- l^*_D(s), s)}{\varphi(s-l^*_D(s))} }{H(s; l^*_D)}
  %&=\frac{\varphi(s)}{H(s; l^*_D(s))}\left[\frac{F'(s)}{F'(s-l^*(D))}\left(\frac{(g-\bar{f})(s-l_D^*(s), s)}{\varphi(s-l^*_D(s))}\right)'+H(s; l^*_D(s))\frac{(g-\bar{f})(s-l_D^*(s), s)}{\varphi(s-l_D^*(s))}\right]
\end{align}
where the last equality is obtained by L'H\^{o}pital's rule. By multiplying $\varphi(s)$ on both sides and performing some algebra, we have
\begin{align}\label{eq:V-explicit-2}
V(s,s)&=\frac{\varphi(s)}{H(s; l^*_D(s))}\left[\frac{F'(s)}{F'(s-l^*(D))}\left(\frac{(g-\bar{f})(s-l_D^*(s), s)}{\varphi(s-l^*_D(s))}\right)'+H(s; l^*_D(s))\frac{(g-\bar{f})(s-l_D^*(s), s)}{\varphi(s-l_D^*(s))}\right].
\end{align}
\begin{remark}[On the smooth-fit principle]\normalfont
We comment on the assumption of the smooth-fit in Remark \ref{rem:shiryaev}.  When the reward function $(g-\bar{f})(x, s)$ is a \emph{nondecreasing function of the second argument and is differentiable in both arguments},  we have $V(s+\epsilon, s+\epsilon)\ge V(s, s)$ and hence the approximation argument \eqref{eq:approx} is valid.  We shall see two examples in Section \ref{sec:special}.  Note that in Figure \ref{Fig-put}, we have $\frac{V(s, s)}{\varphi(s)}\ge \frac{(g-\bar{f})(s-l^*_D(s), s)}{\varphi(s-l^*_D(s))}$  and the smooth-fit holds at $s-l^*_D(s)$ in both cases.
\end{remark}
\end{remark}
\subsection{Special Cases}\label{sec:special}
Before moving on to find the general solution $V(x, s)$, it should be beneficial to briefly review some special cases in finding $V(s,s)$.  In this section, the diffusion $X$ is geometric Brownian motion $\diff X_t =\mu X_t\diff t+\sigma X_t\diff B_t$ and $(\A-q)v(x)=0$ provides $\varphi(x)=x^{\gamma_0}$ and $\psi(x)=x^{\gamma_1}$ with $\gamma_0<0$ and $\gamma_1>1$.  The parameters are $(\mu, \sigma, q, K, k)=(0.05, 0.25, 0.15, 5, 0.5)$. The values of the options here are computed under the physical measure $\p$.
\subsubsection{Perpetual Put} \label{sec:perpetual}
The reward function is $(g-\bar{f})(x, s)=g(x)=(K-x)^+$ which does not depend on $s$ and there is no absorbing boundary.  We can use Corollary \ref{cor:1}, for given $s$, to calculate $l^*_D(s)$ and the corresponding $V(s, s)$. Figure \ref{Fig-put}-(a) is the graph of $g(x, s)/\varphi(x)$ against the horizontal axis $y=F(x)$ when $s=5$.    The function
$R_s(x):=\frac{(g-\bar{f})(x, s)}{\varphi(x)}$ attains unique maximum at $F(x^*)$ where $x^*=3.57604$, so that $l^*_D(s)=s-x^*=1.42396$.  Since $g$ is independent of $s$, so is $x^*$.  At this point the tangent line has slope zero; that is, $\gamma(s-l^*_D(s))=0$.  See the red horizontal line connecting two points $(F(x^*), R_s(x))$ and $(F(s), V(s, s)/\varphi(s))$. At $F(x^*)$, we have the smooth-fit principle hold and $(x^*, s)$ is the continuation region.

Let us see the relationship with Remark \ref{rem:shiryaev}.  For this $l^*_D(s)$, as can be seen from the graph, $V(s+\epsilon)/\varphi(s+\epsilon)=V(s, s)/\varphi(s)$.  Then   \eqref{eq:V-explicit-2} (see also \eqref{eq:for-special-V}) reduces to a very simple form
\begin{equation*}
V(s, s)=\frac{\varphi(s)}{\varphi(s-l^*_D(s))}(g-\bar{f})(s-l^*_D(s), s),
\end{equation*}
which is the same as \eqref{eq:simple-case-no-s}.
\begin{figure}[h]
\begin{center}
\begin{minipage}{0.5\textwidth}
\centering{\includegraphics[scale=0.5]{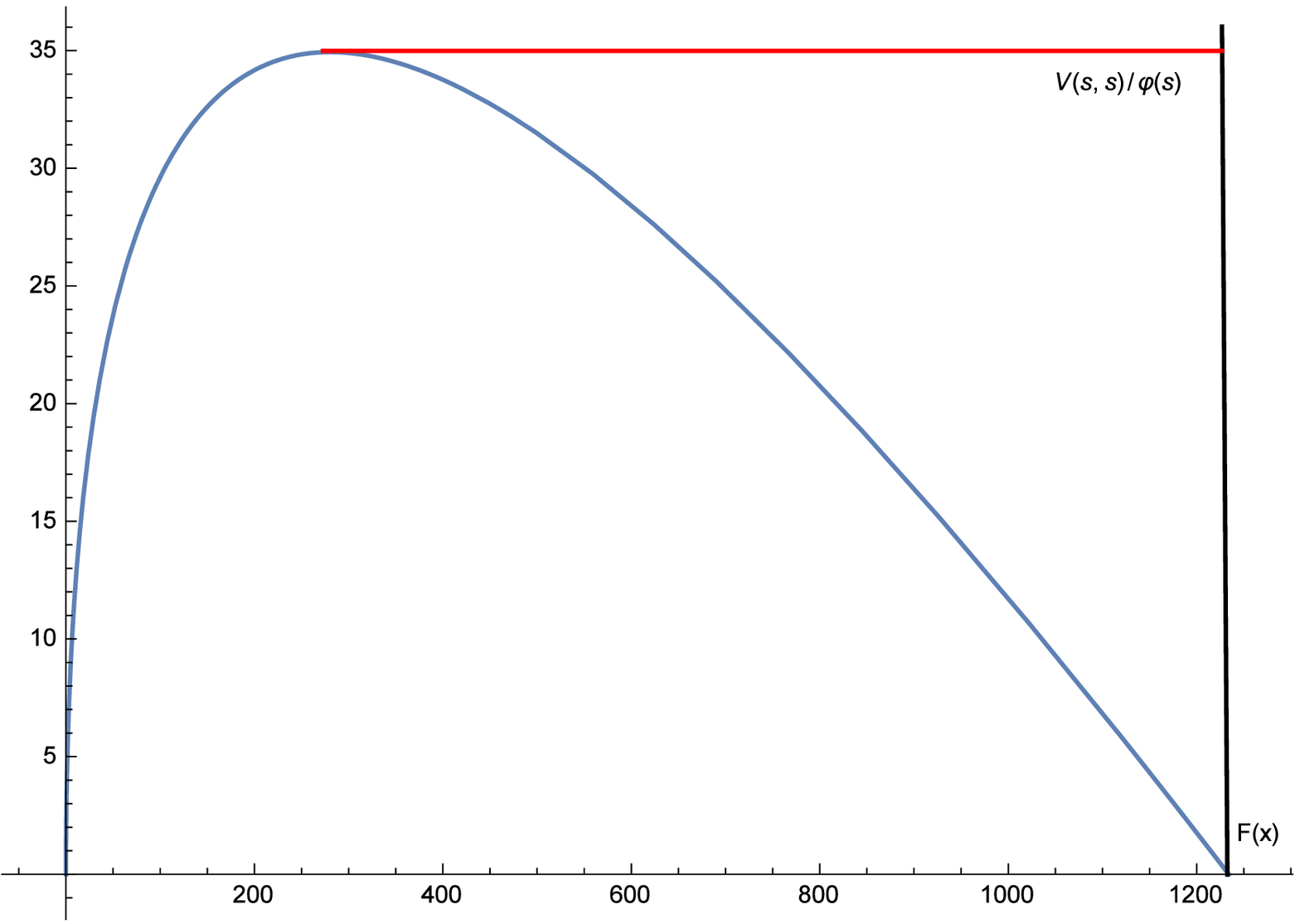}}\\
(a) Perpetual Put $(K=5)$
\end{minipage}
\begin{minipage}{0.45\textwidth}
\centering{\includegraphics[scale=0.4]{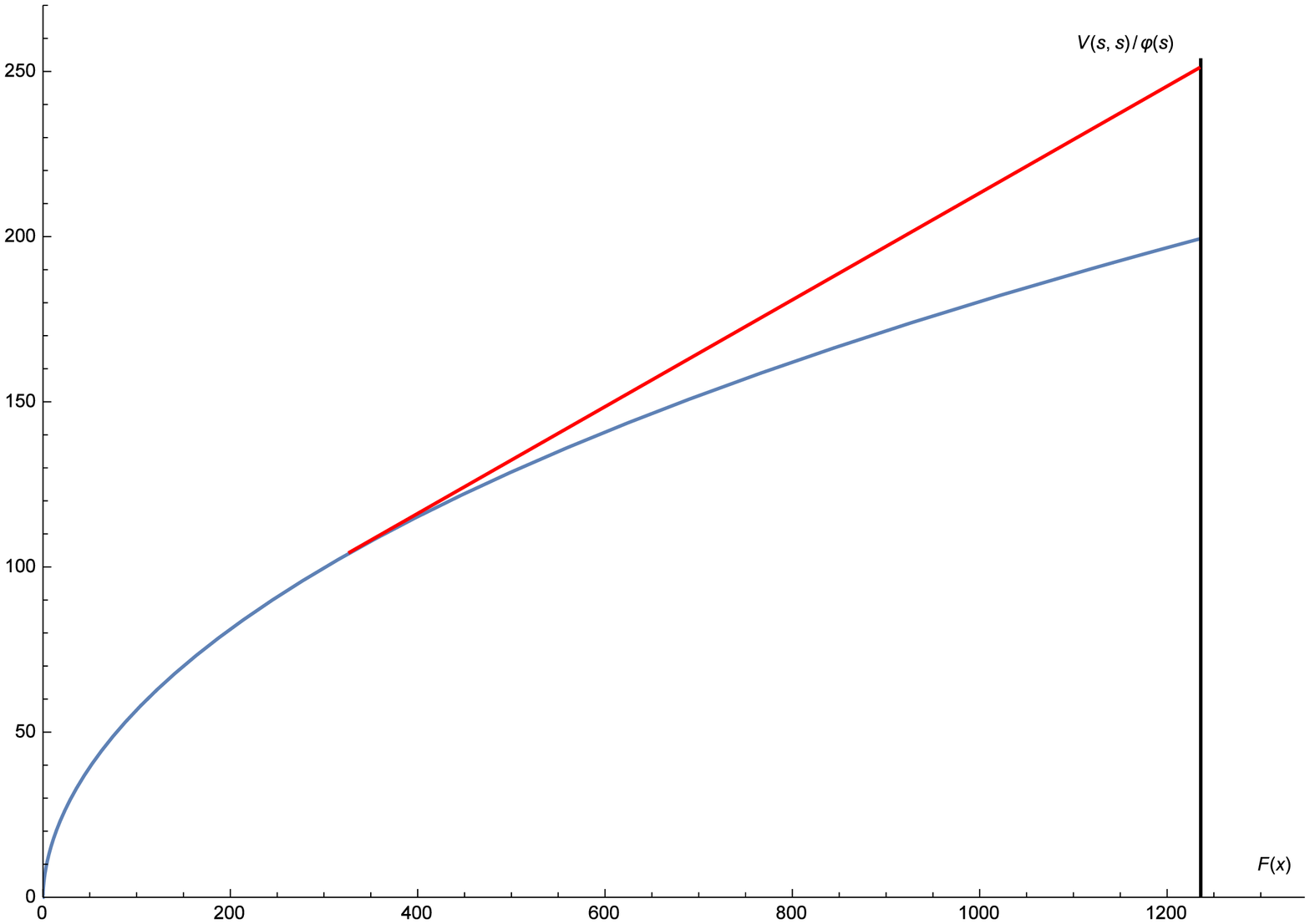}}\\
(b) Lookback Option $(k=1/2)$
\end{minipage}
\caption{\small \textbf{The graphs of $g(x, s)/\varphi(x)$ against the horizontal axis $F(x)$ : }  We fix $s=5$.  In the perpetual put case, the optimal exercise threshold is well-known: $x^*=\frac{\gamma_0K}{\gamma_0-1}=3.57604$, which does not depend on $s$. } \label{Fig-put}
\end{center}
\end{figure}
\subsubsection{Lookback Option}
The reward function is $(g-\bar{f})=s-kx$ where $k\in [0, 1]$. Set $s=5$.  The graph of $g(x, s)/\varphi(x)$ against the horizontal axis $F(x)$ is in Figure \ref{Fig-put}-(b). The optimal threshold $l^*_D(s)$ can be found by Proposition \ref{prop:2}: the optimal level $x^*$ is given by $x^*=\beta s$ where $\beta=0.784073$, independent of $s$, so that $l^*_D(s)=(1-\beta)/s$.

Once $l^*_D(s)$ is obtained, we can compute $V(s, s)$ from \eqref{eq:V-explicit}.  The red line $L_s$ is drawn connecting $(F(s-l^*_D(s)), R_s(s-l^*_D(s)))$ and $(F(s), V(s, s)/\varphi(s))$ with a positive slope $\gamma(s-l^*_D(s))$ and the smooth-fit principle holds at $F(s-l^*_D(s))$.  Accordingly, $(s-l^*_D(s), s)$ is the continuation region.

The case of $k=0$ was solved in the well-known \cite{shepp-shiryaev-1993}. For $k=0$, the reward $(g-\bar{f})(x, s)$ does not depend on $x$, so that we have further simplification of $V(s,s)$.  A straightforward computation from \eqref{eq:V-explicit-2} %(or \eqref{eq:V-explicit2})
yields
\begin{align}\label{eq:simple-case-no-x}
  V(s,s)=\frac{\psi'(s-l^*)\varphi(s)-\psi(s)\varphi'(s-l^*)}{\psi'(s-l^*)\varphi(s-l^*)-\psi(s-l^*)\varphi'(s-l^*)}\cdot (g-\bar{f})(s)
\end{align}
where we write $l^*:=l^*_D(s)$ for simplicity.  It can be confirmed that \eqref{eq:simple-case-no-x} for $(g-\bar{f})(s)=s$ is the same as
\[
V(s, s)=\frac{s}{\gamma_1-\gamma_0}\left(\gamma_1\left(\frac{1}{\beta}\right)^{\gamma_0}-\gamma_0\left(\frac{1}{\beta}\right)^{\gamma_1}\right)
\] in \cite{shepp-shiryaev-1993}.

\subsection{General Solution}
Finally, let us consider the general case, $S_0\geq X_0$. Since we calculated $V(s,s)$, we can represent $V(x,s)$ by (\ref{eq:one-dim-version}):
\begin{eqnarray}\label{eq:gen}
V(x,s)&=&\sup_{\tau\in\S}\E^{x,s}\left[\1_{\{\tau<T_s\}}\1_{\{\tau<T^-_{s-b}\}}e^{-q\tau}(g-\bar{f})(X_{\tau},s)\right.\\
&&+\1_{\{T^-_{s-b}<T_s\}}\1_{\{T^-_{s-b}\leq \tau\}}e^{-qT^-_{s-b}}\{-\bar{f}(X_{T^-_{s-b}},s)\}\nonumber\\
&&\left.+\1_{\{T_s<\tau\wedge T^-_{s-b}\}}e^{-qT_s}V(s,s)\right].\nonumber
\end{eqnarray}
As we noted in Section 2, this can be seen as just an one-dimensional optimal stopping problem for the process $X$. So we can restrict the set $\S$ of stopping times to $S'(b)$ defined in (\ref{eq:D(m)}). However,
our problem is subject to absorption while $X$ is in its excursions and is different from ordinal ones.  In the last part of this section \ref{sec:method},
we shall illustrate how to implement the solution method presented in Dayanik and Karatzas \cite{DK2003}.
\begin{remark}\label{rem:pattern}\normalfont
  More general situations can be handled in our problem formulation. Recall that \eqref{eq:gen} is written under the assumption that when one hits the boundary, no reward would be given.  However, we could also assume that one  still obtains the reward in the amount of $g(X_\tau)$.  In this case, $-\bar{f}(X_{T^-_{s-b}},s)$ in the second term on the right-hand side of \eqref{eq:gen} should read $(g-\bar{f})(X_{T^-_{s-b}},s)$.
\end{remark}

\subsection{Solution Method}\label{sec:method}
Now suppose that we have found $V(s, s)$ for each $s\in \R_+$.  The next step is to solve \eqref{eq:gen}.  Consider an excursion from the level $S=s$. Recall that $V(s,s)$ represents the value that one would obtain when $X$ would return to that level $s$. On the other hand, if $X$ reaches the absorbing boundary before it returns to $s$, then one would obtain the amount of $(g-\bar{f})(X_{\zeta},s)$ under the assumption we receive $g(X_\zeta, s)$ at the absorbing boundary.

By recalling Section \ref{sec:diffusion-facts}, this fact translates into the following geometrical property: the value function in the transformed space must pass the points: \[\left(F(s-b(s)),\frac{(g-\bar{f})(s-b(s),s)}{\varphi(s-b(s))}\right) \conn \left(F(s),\frac{V(s,s)}{\varphi(s)}\right).\] Then the task is to find for each $s$ the smallest concave majorant $W_s(y)$ of $H_s(y):=\frac{(g-\bar{f})(F^{-1}(y),s)}{\varphi(F^{-1}(y))}$ and to recognize the region $H_s(y)=W_s(y)$ for optimal stopping region. %based on Proposition 5.5 of Dayanik and Karatzas \cite{DK2003}.
Mathematically speaking, $W$ must satisfy the following conditions:

\begin{enumerate}
\renewcommand{\labelenumi}{(\roman{enumi})}
 \item $W_s(y)\geq\frac{(g-\bar{f})(y,s)}{\varphi(s-b(s))}$ on $[F(s-b(s)),F(s)]$,
 \item $W_s(F(s))=\frac{V(s,s)}{\varphi(s)}$,
 \item $W_s(F(s-b(s)))=\frac{(g-\bar{f})(s-b(s),s)}{\varphi(s-b(s))}$,
 \item $W_s$ is concave on $[F(s-b(s)),F(s)]$, and
 \item for any functions $\overline{W}_s$ which satisfies four conditions above, $W_s\leq\overline{W}_s$ on $[F(s-b(s)),F(s)]$.
\end{enumerate}
Now once we have done with one $s$, we then move on to another $\tilde{s}$, say, and find $W_{\tilde{s}}$ in the new interval
$[F(\tilde{s}-b(\tilde{s})), F(\tilde{s})]$.

Figure\ref{Fig1} illustrates a typical example of the graphs of $W_{s}$ and $H_{s}$ in transformed space.  Fix $s=\bar{s}$. Take two points $F(\s-b(\s))$ and $F(\s)$ on the horizontal axis and find $W_{\s}(y)$ that satisfies the above conditions. For this purpose, three dashed vertical lines are drawn at $y=F(\s-b(\s)), R_{\s}$, and $F(\s)$ from the left to right. $H_{\s}$ is the blue line on $y\in [F(\s-b(\s)),F(\s)]$. $W_{\s}$ is the green line on $y\in [F(\s-b(\s)),R_{\s}]$ and the blue line on $y\in(R_{\s},F(\s)]$. It follows that the point $(\s,\s)$ is included in the optimal stopping region.  Note that, in this example, $V(s,s)=(g-\bar{f})(s,s)$ holds for $s\ge s'$ such that $y=F(s')$ is the point where the red and blue line intersect in Figure \ref{Fig1}. In \eqref{eq:V-explicit}, this corresponds to $l^*_D(s')=0$. In other words, the region where the red line is drawn above the blue line indicates where $V(s, s)>(g-\bar{f})(s,s)$. Therefore, if it happens that some $s''$ falls into this region, optimal strategy for this particular level $s''$ is to wait until $X$ moves back to $s''$ (and continue) or hits the boundary $s''-b(s'')$.
\begin{figure}[h]
\begin{center}
\begin{minipage}{0.6\textwidth}
\centering{\includegraphics[scale=1]{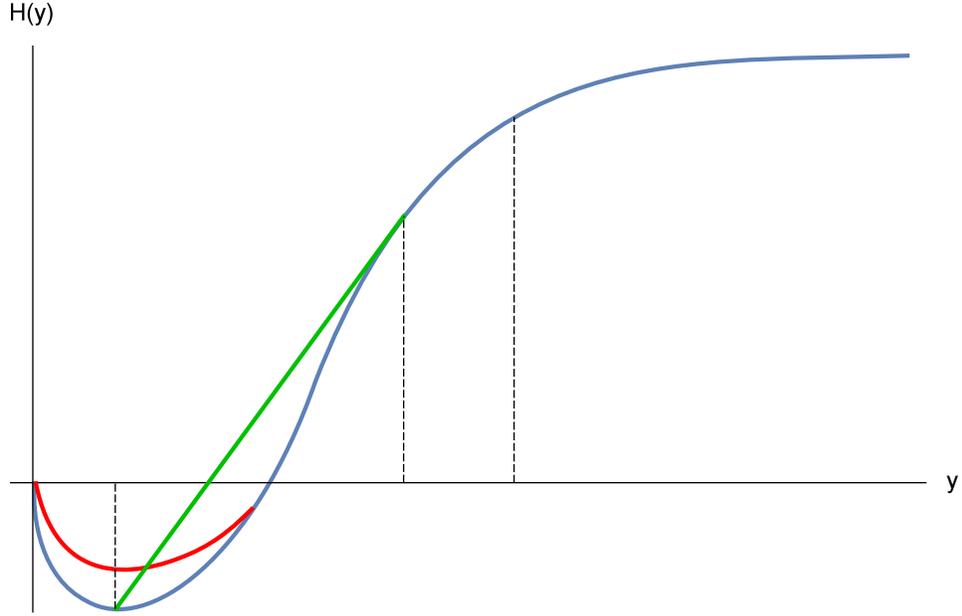}}\\
\end{minipage}
\caption{A typical example of $W_{s}$ and $H_{s}$. For a fixed $s=\s$, three vertical dashed lines are drawn at $y=F(\s-b(\s)), R_{\s}, F(\s)$ from the left to right. $H_{\s}$ is the blue line on $y\in [F(\s-b(\s)),F(\s)]$. $W_{\s}$ is the green line on $y\in [F(\s-b(\s)),R_{\s}]$ and the blue line on $y\in(R_{\s},F(\s)]$.} \label{Fig1}
\end{center}
\end{figure}
\section{Application}
In Section \ref{sec:special}, we have demonstrated how to find $V(s, s)$ when the reward function $(g-\bar{f})(x, s)$ depends on $s$ or not, we now focus on illustrating the way one can find the general $V(x, s)$ and how the absorbing boundary $b=b(S)$ affects the solution.  Specifically, we solve the following financial problem. Imagine that an investor considers investing in a hedge fund that manages a risky portfolio.  The investor is conservative enough to take the strategy that when her asset value deteriorates to $100\beta\%$ ($\beta\in[0,1)$) of the maximum record, then she sells all her stakes in the hedge fund and takes the proceeds into risk-free assets. The problem is when she should stop the investment in the hedge fund.

We assume that a geometric Brownian motion $X$ represents the asset value in the risky portfolio and satisfies the following stochastic differential equation:
\[
\diff X_t=\mu X_t \diff t + \sigma X_t \diff B_t,\quad t\in \R,
\]
where $\mu$ and $\sigma$ are constants and $B$ is a standard Brownian motion under $\mathbb{P}$. We consider that the risk-free rate is $q$,  the reward functions in our problem $f$ and $g$ are set to $f(x,s)=x^{\frac{1}{2}}$ and $g(x,s)=x$, and the absorbing boundary is $b(s)=(1-\beta) s$ to match the problem explained above. In addition, we assume
\[
\frac{\mu}{2}-\frac{\sigma^2}{8}-q<0,
\]
for the convergence of the continuous reward part.

Under those settings, related functions are calculated as follows:
\begin{align*}
\bar{f}(x)&=\E^{x,s} \left[ \int^{\infty}_0 e^{-qt}f(X_t,S_t)\diff t \right]=\alpha x^{\frac{1}{2}},\\
\psi(x)& =x^{\gamma_1} \quad\text{and}\quad \varphi(x)=x^{\gamma_0}
\end{align*}
where
\begin{eqnarray*}
\alpha&=&-\frac{1}{\frac{\mu}{2}-\frac{\sigma^2}{8}-q}>0,\\
\gamma_0&=&\frac{1}{2}\left(-\left(\frac{2\mu}{\sigma^2}-1\right)-\sqrt{\left(\frac{2\mu}{\sigma^2}-1\right)^2+\frac{8q}{\sigma^2}}\right)<0,
\end{eqnarray*}
and
\begin{eqnarray*}
\gamma_1&=&\frac{1}{2}\left(-\left(\frac{2\mu}{\sigma^2}-1\right)+\sqrt{\left(\frac{2\mu}{\sigma^2}-1\right)^2+\frac{8q}{\sigma^2}}\right)>1.
\end{eqnarray*}

Now we identify $W$. %Since the reward function itself is independent of $s$, we can start as if this was an ordinary optimal stopping in one-dimension and we find
$V(s,s)$ can be computed from Corollary \ref{cor:1}, which is
\begin{equation}\label{Vss}
V(s,s)=
\begin{cases}
\beta^{1-\gamma_0}s-\alpha\beta^{\frac{1}{2}-\gamma_0}s^{\frac{1}{2}},& \text{ on }\quad s<\left( \frac{ \alpha(1-\beta^{\frac{1}{2}-\gamma_0}) }{ 1-\beta^{1-\gamma_0} } \right),\\
(g-\bar{f})(s,s),& \text{ on }\quad s\geq\left( \frac{ \alpha(1-\beta^{\frac{1}{2}-\gamma_0}) }{ 1-\beta^{1-\gamma_0} } \right).
\end{cases}
\end{equation}
One the other hand, the reward function $H_s$ in the transformed space is
\[
H(y)=\frac{(g-\bar{f})(F^{-1}(y),s)}{\varphi(F^{-1}(y))}=y^{\frac{1-\gamma_0}{\gamma_1-\gamma_0}}-\alpha y^{\frac{\frac{1}{2}-\gamma_0}{\gamma_1-\gamma_0}}.
\]
$H_s$ has the inflection point at $y=r$, where
\begin{equation}\label{infection}
r:=\left(\frac{\alpha(\frac{1}{2}-\gamma_0)(\frac{1}{2}-\gamma_1)}{(1-\gamma_0)(1-\gamma_1)}\right)^{2(\gamma_1-\gamma_0)},
\end{equation}
so $H$ is convex on $y<r$ and concave on $y>r$ and goes to infinity as $y$ goes to infinity, so that $H(y)$ looks like the blue curve in Figure \ref{Fig1}. The tangent line $L_s$ to $H_s$ at point $(F(s),\frac{V(s,s)}{\varphi(s)})$, which is represented by
\[
L_s(y)=\left(\frac{1-\gamma_0}{\gamma_1-\gamma_0}s^{1-\gamma_1}-\alpha\frac{\frac{1}{2}-\gamma_0}{\gamma_1-\gamma_0}s^{1-\gamma_1}\right)y-\left(\frac{1-\gamma_0}{\gamma_1-\gamma_0}-1\right)s^{\frac{1}{2}-\gamma_1}+\alpha\left(\frac{\frac{1}{2}-\gamma_0}{\gamma_1-\gamma_0}-1\right)s^{\frac{1}{2}-\gamma_1}.
\]
So we compare $H_s(y)$ and $L(y)$ at point $y=s-b(s)$ to find
\begin{equation}\label{tangent}
\begin{cases}
L_s(s-b(s))\leq H_s(s-b(s)),& \text{ on } \quad F(s)\leq u,\\
L_s(s-b(s))> H_s(s-b(s)),& \text{ on } \quad F(s)>u,
\end{cases}
\end{equation}
where
\[
u=\left( \frac{ \alpha\left(\frac{\frac{1}{2}-\gamma_0}{\gamma_1-\gamma_0}+\beta^{\frac{\frac{1}{2}-\gamma_0}{\gamma_1-\gamma_0}}-\beta^{\gamma_1-\gamma_0}\frac{\frac{1}{2}-\gamma_0}{\gamma_1-\gamma_0}-1\right) }{ \frac{1-\gamma_0}{\gamma_1-\gamma_0}+\beta^{\frac{1-\gamma_0}{\gamma_1-\gamma_0}}-\beta^{\gamma_1-\gamma_0}\frac{1-\gamma_0}{\gamma_1-\gamma_0}-1 } \right)^{2(\gamma_1-\gamma_0)}
\]

From (\ref{Vss}), (\ref{infection}), and (\ref{tangent}), we can identify the function $W$. We divide into three cases depending on the value of $s$.

(i) If \emph{$F(s)\leq u$}, then $W_s(y)$ is the linear function connecting two points $(F(s-b(s)),\frac{(g-\bar{f})(s-b(s),s)}{\varphi(s-b(s))})$ and $(F(s),\frac{V(s,s)}{\varphi(s)})$, that is,
\[
W_s(y)=\frac{\frac{V(s,s)}{\varphi(s)} - \frac{(g-\bar{f})(s-b(s),s)}{\varphi(s-b(s))}}{F(s)-F(s-b(s))}(y-F(s))+\frac{V(s,s)}{\varphi(s)}.
\]
The optimal strategy is to wait until $X$ reaches to $s$ or $s-b(s)$.\\

(ii) If \emph{$F(s-b(s))\leq r$ and $u<F(s)$}, draw a tangent line from $(F(s-b(s)),\frac{(g-\bar{f})(s-b(s),s)}{\varphi(s-b(s))})$ to $H$, and let $y=R_s$ be the tangent point of this line and $H$. Then $W_s(y)$ can be represented as follows:
\[
W_s(y)=
\begin{cases}
\frac{\frac{(g-\bar{f})(R_s,s)}{\varphi(R_s)} - \frac{(g-\bar{f})(s-b(s),s)}{\varphi(s-b(s))}}{F(R_s)-F(s-b(s))}(y-F(s-b(s)))+\frac{(g-\bar{f})(s-b(s),s)}{\varphi(s-b(s))}, &\text{on} \quad y\in[F(s-b(s)), R_s),\\
H_s(y), &\text{on} \quad y\in[R_s,F(s)].
\end{cases}
\]
This implies that when $X_0\in(s-b(s),F^{-1}(R_s))$, one should wait until $X$ hits $s-b(s)$ or $F^{-1}(R_s)$, and when $X_0\in[F^{-1}(R_s),s]$, one should stop immediately.
\\

(iii) If \emph{$F(s-b(s))>r$}, then $H_s$ is concave on $[F(s-b(s)),F(s)]$, so we have
\[
W_s(y)=H_s(y)=y^{\frac{1-\gamma_0}{\gamma_1-\gamma_0}}-\alpha y^{\frac{\frac{1}{2}-\gamma_0}{\gamma_1-\gamma_0}},
\]
and optimal strategy is to stop immediately to receive $g-\bar{f}$. Note that this $W_s(\cdot)$ does not depend on $s$ because nor does $g-\bar{f}$.

Figure \ref{Fig2} illustrates an optimal stopping region on $(x,s)$-plane for this problem. When $(X,S)$ reaches to the shaded region, one should stop the process. Note that $(X,S)$ moves in the upward direction only along the line $x=s$ and moves horizontally in other parts.

\begin{figure}[h]
\begin{center}
\begin{minipage}{0.6\textwidth}
\centering{\includegraphics[scale=1]{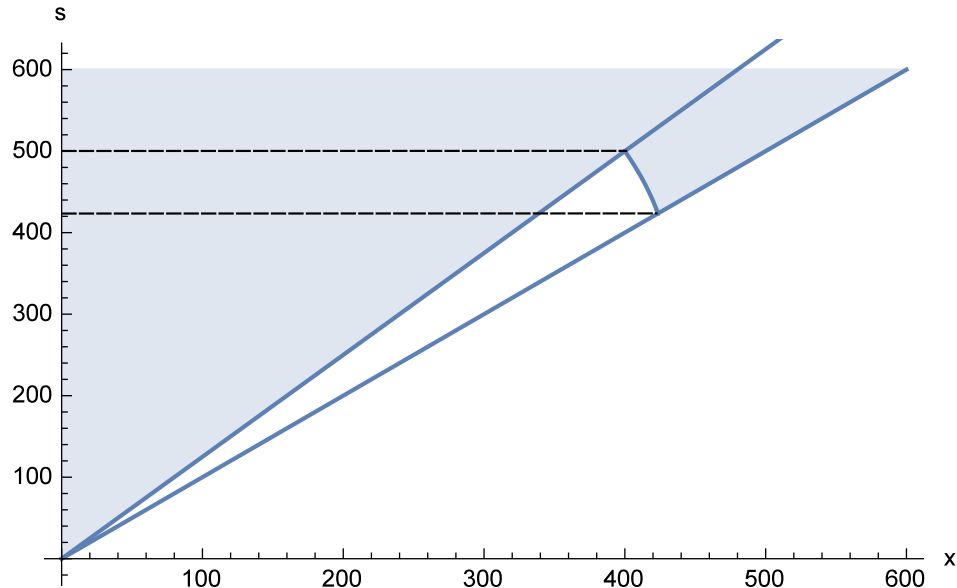}}\\
\end{minipage}
\caption{The shaded region is an optimal stopping region on $(x,s)$-plane. The left straight line is $s=x/\beta$ for $\beta=0.8$ and the right one is $s=x$. Two horizontal dashed lines are drawn at $s=F^{-1}(u),F^{-1}(r)/\beta$. The parameters are $(\mu, \sigma, q)=(0.05, 0.1, 0.1)$} \label{Fig2}
\end{center}
\end{figure}

\bibliographystyle{plain}
{\small \bibliography{ospbib2}}

\end{document}